\newcommand{\dotafter}[1]{#1.}
\titleformat{\section}[hang]
{\normalfont\large\bfseries}{\thesection.}{.5em}{\dotafter}[]
\titleformat{\subsection}[runin]
{\normalfont\bfseries}{\thesubsection.}{.4em}{}[.]
\titlespacing*{\subsection}{0pt}{3ex plus 1ex minus .2ex}{1em}
\titleformat{\paragraph}[runin]{\normalfont\bfseries}{\theparagraph.}{.4em}{}[.]
\titlespacing*{\paragraph}{0pt}{2.5ex plus 1ex minus .2ex}{1em}
\newcommand{\bmu}{\bar{\mu}}
\newcommand{\bU}{\bar{U}}
\newcommand{\bX}{\bar{X}}
\newcommand{\tU}{\tilde{U}}
\newcommand{\tX}{\tilde{X}}
\newcommand{\hmu}{\hat{\mu}}
\newcommand{\E}{\mathbb{E}}
\newcommand{\R}{\mathbb{R}}
\newcommand{\mP}{\mathcal{P}}
\newcommand{\p}{\partial}
\newcommand{\Lip}{{\rm Lip}}
\newcommand{\tmu}{\tilde{\mu}}
\renewcommand{\p}{\partial}
\newcommand{\s}{\underline{s}}
\newcommand{\ur}{\underline{r}}
\renewcommand{\l}{\left}
\renewcommand{\r}{\right}
\renewcommand{\d}{{\rm d}}
\newcommand{\ce}{\color{black}}
\numberwithin{equation}{section}
\newtheorem{theoreme}{Theorem}[section]
\newtheorem{proposition}[theoreme]{Proposition}
\newtheorem{lemme}[theoreme]{Lemma}
\theoremstyle{definition}
\newtheorem{definition}[theoreme]{Definition}
\newtheorem{hyp}{Assumption}[section]
\theoremstyle{remark}
\newtheorem{remarque}[theoreme]{Remark}
\begin{document}
\title[Particle Systems for Mean reflected SDEs]{Particles Systems and Numerical Schemes for \\ Mean Reflected Stochastic Differential Equations}
\author[Ph. Briand]{Philippe Briand}
\address{Univ. Savoie Mont Blanc, CNRS, LAMA, F-73000 Chambéry, France}
\email{philippe.briand@univ-smb.fr}

\author[P.-É. Chaudru de Raynal]{Paul-Éric Chaudru de Raynal}
\address{Univ. Savoie Mont Blanc, CNRS, LAMA, F-73000 Chambéry, France}
\email{paul-eric.chaudru-de-raynal@univ-smb.fr}

\author[A. Guillin]{Arnaud Guillin}
\address{LM UMR 6620, Univ. Blaise Pascal \& CNRS}
\email{arnaud.guillin@math.univ-bpclermont.fr}

\author[C. Labart]{Céline Labart}
\address{Univ. Savoie Mont Blanc, CNRS, LAMA, F-73000 Chambéry, France}
\email{celine.labart@univ-smb.fr}

\thanks{The authors would like to thank very much Hélène Hibon for her careful reading of a previous version of this work and for her valuable comments and remarks.}

\date{Last modification: 2017-02-28}

\begin{abstract}
This paper is devoted to the study of reflected Stochastic Differential Equations when the constraint is not on the paths of the solution but acts on the law of the solution. These reflected equations have been introduced recently by Briand, Elie and Hu \cite{briand_bsdes_2016} in the context of risk measures. Our main objective is to provide an approximation of solutions to these reflected SDEs with the help of interacting particles systems. This approximation allows to design a numerical scheme for this kind of equations. 

\end{abstract}

\maketitle

\section{Introduction}
In this paper, we are concerned with a special type of reflected Stochastic Differentials Equations (SDEs for short in the sequel) in which the constraint is not directly on the paths of the solution to the SDE as in the usual case but on the law of the solution. Typically, the integral of a given function, say $h$, with respect to the law of the solution to the SDE is asked to be nonnegative. We call Mean Reflected Stochastic Differential Equation (MR-SDE) this kind of reflected SDEs which are described by the following system:
\begin{equation}\label{eq:main}	
	\left\lbrace\begin{split}
	& X_t  =X_0+\int_0^t b(X_s)\d s + \int_0^t \sigma(X_s) \d B_s + K_t, \quad t\geq 0,\\
	& \E[h(X_t)] \geq 0, \quad \int_0^t \E[h(X_s)] \, \d K_s = 0, \quad t\geq 0,
	\end{split}\right.
	\end{equation}
where $b$, $\sigma$ and $h$ are given Lipschitz functions from $\R$ to $\R$ and where $(B_{t}, t\geq 0)$ stands for a standard Brownian motion defined on some complete probability space $(\Omega, \mathcal{F}, \mathbb{P})$. We will always assume that $h$ is nondecreasing and that the law of $X_0$ is such that $\E[h(X_0)] \geq 0$. The solution to~\eqref{eq:main} is the couple of continuous processes $(X,K)$, $K$ being needed to ensure that the constraint is satisfied, in a minimal way according to the last condition namely the Skorokhod condition.

Reflected stochastic differential equations have been widely studied in the literature and we refer to the works \cite{tanaka_stochastic_1979,p._-l._lions_stochastic_1984} for an overview of this theory. As said before, the main particularity comes here from the fact that the constraint acts on the law of the process $X$ rather than on its paths. This kind of processes has been introduced by Briand, Elie and Hu in their backward forms in \cite{briand_bsdes_2016}. In that work, they show that mean reflected backward stochastic processes exist and are uniquely defined by the associated system of equations of the form of \eqref{eq:main} under the same assumptions we use below. The main requirement for uniqueness is to ask the process $K$ to be a deterministic function. Our main objective in this paper is to study the convergence of a particles system approximation of \eqref{eq:main} in order to be able to design numerical schemes for computing solutions to~\eqref{eq:main}.

Due to the fact that the reflection process $K$ depends on the law of the position, we use for the approximation a particle system interacting through the reflection process. In the terminology of McKean-Vlasov processes (see \cite{burkholder_topics_1991} for an overview), the reflection is hence non-linear. To conclude the analogy with McKean-Vlasov processes let us emphasize that the results obtained in the sequel can be extended to the case where the dynamic of the process \eqref{eq:main} depends also on its own law.

\medskip

Our main motivation for studying~\eqref{eq:main} comes from financial problems submitted to risk measure constraints. Given any position $X$, its risk measure $\rho(X)$ can be seen as the amount of own fund needed by the investor to hold the position. Mathematically, a risk measure is a nonincreasing application $\rho: \mathbb{L}^2(\Omega) \ni X \mapsto \rho(X) \in \R$ which is translation invariant in the sense that for any real constant $m$, $\rho(\cdot+m) = \rho(\cdot)-m$. Given a risk measure, the acceptance set $\mathcal{A}$ \emph{i.e.} the set of all acceptable positions is defined as $ \mathcal{A} = \{X:\ \rho(X)\leq 0\}$. A classical example of risk measure is the so called Value at Risk at level $\alpha \in (0,1)$:  ${\rm VAR}_\alpha(X) = \inf\{m:\ \mathbb{P}(m+X<0)\leq \alpha\}$ with acceptance set $\mathcal{A}=\{X:\ \mathbb{P}(X<0)\leq 0\}$. This means that the probability of loss has to be below the level $\alpha$. Another example of risk measure is the following: $\rho(X) = \inf\{m:\ \mathbb{E}[u(m+X)]\geq p\}$ where $u$ is a utility function (concave and increasing) and $p$ is a given threshold. In this case the acceptance set is $\mathcal{A}=\{X:\ \mathbb{E}[u(X)]\geq p\}$: a minimal profit is required. We refer the reader to~\cite{ADEH99} for coherent risk measures and to~\cite{FS02} for convex risk measures.

Suppose now that we are given a portfolio $X$ of assets whose dynamic, when there is no constraint, follows the SDE
\begin{equation*}
\d X_t = b(X_t) \d t + \sigma(X_t) \d B_t, \qquad t\geq 0.
\end{equation*}
For instance, in the Black \& Scholes model, when the strategy of the agent depends on the wealth of the portfolio. Given a risk measure $\rho$, one can ask that $X_t$ remains an acceptable position at each time $t$. In the case of the value at risk, this means that the probability of loss is not too great and, in the case of a risk measure defined by a utility function, a minimal profit is guaranteed. In both examples, the constraint rewrites $\E\left[h(X_t)\right] \geq 0$ for $t\geq 0$ ($h = \mathbf{1}_{\R^+}-(1-\alpha)$ in the case of $\text{VAR}_\alpha$, $h=u-p$ in the utility case).

In order to satisfy this constraint, the agent has to add some cash in the portfolio through the time and the dynamic of the wealth of the portfolio becomes 
\begin{equation*}\label{eq:exempleport2}
\d X_t = b(X_t) \d t + \sigma(X_t) \d B_t +\d K_t, \qquad t\geq 0,
\end{equation*}
where $K_t$ is the amount of cash added up to time $t$ in the portfolio to balance the "risk" associated to $X_t$. Of course, the agent wants to cover the risk in a minimal way, adding cash only when needed: this leads to the Skorokhod condition $\E[h(X_t)] \d K_t = 0$. 

Putting together all conditions, we end up with a dynamic of the form \eqref{eq:main} for the portfolio.

\medskip

The paper is organized as follows. In Section \ref{sec:EUPS}, by letting the coefficients satisfy the usual smoothness assumptions (say Lipschitz continuity) and adding a structural assumption on the function $h$ (say $h$ bi-Lipschitz), we show that the system admits a unique strong solution \emph{i.e.} there exists a unique pair of process $(X,K)$ satisfying system \eqref{eq:main} almost surely, the process $K$ being an increasing and deterministic process. Then, we show that, by adding some regularity on the function $h$, the Stieljes measure $\d K$ is absolutely continuous with respect to the Lebesgue measure and we give the explicit expression of its density.  

Having in mind the analogy with McKean-Vlasov processes, we also show in Section \ref{sec:PMRSDE} that system \eqref{eq:main} can be seen as the limit of an interacting particles system with oblique reflection of mean field type. This could reflect a system of large number of players whose positions are constrained by the mean of the positions of the other players. If all the players have the same dynamic, and if the interaction between the players is on mean field type, we show that there is a propagation of chaos phenomenon so that when the number of players tends to the infinity, the reflection no more depends on the other positions, but only on their statistical distribution. This obviously comes from the law of large number and is what, in fact, exactly happened in the classical McKean-Vlasov setting.

As an application, this result allows to define in Section \ref{sec:NSMRSDE} an algorithm based on this interacting particle system together with a classical Euler scheme which gives a strong approximation of the solution of \eqref{eq:main}. This leads to an approximation error proportional, up to a $\log$ factor, to the number of points of the discretization grid of the time interval (namely of $(\log n/n)^{1/2}$, where $n$ is the number of points of the discretization grid) and on the number of particles (namely $N^{-1/4}$ when the function $h$ is only bi-Lipschitz and $N^{-1/2}$ when the function $h$ is smooth, $N$ standing for the number of particles). Finally, we illustrate in Section \ref{sec:NI} these results numerically.

\section{Existence, uniqueness and properties of the solution}\label{sec:EUPS}

Throughout this paper, we consider the following set of assumptions.

\begin{hyp}\label{H:eds}\hfill
	\renewcommand{\labelenumi}{(\roman{enumi})}
	\begin{enumerate}
		\item The functions $b : \R\longmapsto\R$ and $\sigma: \R\longmapsto\R$ are  Lipschitz continuous ;
		\item The random variable $X_0$ is square integrable.
	\end{enumerate}
\end{hyp}

\begin{hyp}\label{H:h}\hfill
	\renewcommand{\labelenumi}{(\roman{enumi})}
	\begin{enumerate}
		\item The function $h: \R\longmapsto\R$ is an increasing function and there exist $0<m\leq M$ such that
		\begin{equation*}
			\forall x\in\R,\quad \forall y\in\R,\qquad m|x-y|\leq |h(x)-h(y)| \leq M|x-y|.
		\end{equation*}
		\item The initial condition $X_0$ satisfies: $\E[h(X_0)]\geq 0$.
	\end{enumerate}
\end{hyp}

\begin{hyp}\label{H:moments}
	\renewcommand{\labelenumi}{(\roman{enumi})}
	$\exists p>4$ such that $X_0$ belongs to $\mathbb{L}^p$: $\E[|X_0|^p] < \infty$.
\end{hyp}

\begin{hyp}\label{H:smooth}
	\renewcommand{\labelenumi}{(\roman{enumi})}
	The mapping $h$ is a twice continuously differentiable  function with bounded derivatives.
\end{hyp}

We emphasize that existence and uniqueness results hold only under \ref{H:eds} which is the standard assumption for SDEs and \ref{H:h} which is the assumption used in~\cite{briand_bsdes_2016}. The convergence of particle systems require only an additional integrability assumption on the initial condition, namely~\ref{H:moments}. We sometimes add the smoothness assumption \ref{H:smooth} on $h$ in order to improve some of the results.

We first recall the existence and uniqueness result of \cite{briand_bsdes_2016} in the case of SDEs.

\begin{definition}
	 A couple of continuous processes $(X,K)$ is said to be a flat deterministic solution to~\eqref{eq:main} if $(X,K)$ satisfy \eqref{eq:main} with $K$ being a non-decreasing deterministic function with $K_0=0$.
\end{definition}

Given this definition, we have the following result.

\begin{theoreme}[Briand, Elie and Hu, \cite{briand_bsdes_2016}]\label{th:wp}
Under assumptions \ref{H:eds} and \ref{H:h}, the mean reflected SDE \eqref{eq:main} has a unique deterministic flat solution $(X,K)$. Moreover,
\begin{equation*}
	\forall t\geq 0,\quad K_t = \sup_{s\leq t} \inf\{x \geq 0\:\ \E\left[ h(x + U_s )\right] \geq 0\},
\end{equation*}
where $(U_t)_{0\leq t \leq T}$ is the process defined by:
\begin{equation}
	\label{eq:defu}
	U_t = X_0 + \int_0^t b(X_s)\d s \int_0^t\sigma(X_s)\d B_s.
\end{equation}
\end{theoreme}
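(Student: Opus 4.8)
The plan is to recast the problem as a fixed point for the deterministic map $K$. Since we look for $K$ deterministic, the two lines of~\eqref{eq:main} can be decoupled: for any fixed non-decreasing continuous deterministic $K$ with $K_0=0$, the equation $X_t = X_0 + \int_0^t b(X_s)\,\d s + \int_0^t\sigma(X_s)\,\d B_s + K_t$ is a classical SDE with Lipschitz coefficients perturbed by a deterministic finite-variation term, so under~\ref{H:eds} it has a unique strong solution $X^K$; set $U^K_t := X^K_t - K_t$, which is exactly~\eqref{eq:defu}. Writing $\bar H(x,\nu) := \int_\R h(x+u)\,\nu(\d u)$ for a probability measure $\nu$, I would define the map
\[
\Phi(K)_t := \sup_{s\leq t}\,\inf\bigl\{x\geq 0:\ \bar H\bigl(x,\mathcal{L}(U^K_s)\bigr)\geq 0\bigr\}.
\]
A fixed point $K=\Phi(K)$ then produces, together with $X=X^K$, a candidate for which the announced formula holds by construction, so existence and uniqueness of the flat deterministic solution reduce to existence and uniqueness of a fixed point of $\Phi$.

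The bi-Lipschitz assumption~\ref{H:h} is what makes $\Phi$ well defined and contracting. Because $h$ is increasing with $h(x+u)-h(u)\geq m x$ for $x\geq 0$, the map $x\mapsto\bar H(x,\nu)$ is increasing, bi-Lipschitz with constants $m\leq M$, and tends to $+\infty$; hence $G_t:=\inf\{x\geq 0:\bar H(x,\mathcal{L}(U^K_t))\geq 0\}$ is finite, and inverting the bi-Lipschitz bound gives $|G^1_t-G^2_t|\leq \tfrac{M}{m}\,\E\bigl[\,|U^1_t-U^2_t|\,\bigr]$ for two choices of free process. Since $U^1-U^2$ carries only the drift and martingale increments while $X^i=U^i+K^i$, the Burkholder–Davis–Gundy inequality followed by Gronwall's lemma yields $\sup_{s\leq t}\E\bigl[|U^1_s-U^2_s|^2\bigr]\leq C\,t\,\sup_{s\leq t}|K^1_s-K^2_s|^2$. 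Combining the two estimates with $|\sup_s a_s-\sup_s b_s|\leq\sup_s|a_s-b_s|$ gives
\[
\sup_{s\leq t}\bigl|\Phi(K^1)_s-\Phi(K^2)_s\bigr|\ \leq\ \frac{M}{m}\,C^{1/2}\,\sqrt{t}\ \sup_{s\leq t}\bigl|K^1_s-K^2_s\bigr|.
\]
For $t=t_0$ small enough this is a contraction for the sup norm, yielding a unique fixed point on $[0,t_0]$; iterating on the successive intervals $[kt_0,(k+1)t_0]$ extends it uniquely to $[0,T]$, and continuity of $t\mapsto\mathcal{L}(U_t)$ transfers to continuity of $G$ and hence of $K=\sup_{s\leq\cdot}G_s$.

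It then remains to check that the fixed point is genuinely a flat deterministic solution and is the only one. By monotonicity of $\bar H$, the relation $\E\left[h(X_t)\right]=\bar H(K_t,\mathcal{L}(U_t))\geq 0$ holds since $K_t\geq G_t$; the running supremum is flat (non-decreasing, with $K_0=0$ because $\E[h(X_0)]\geq0$ forces $G_0=0$), and it increases only at times where $K_s=G_s>0$, at which $\bar H(K_s,\mathcal{L}(U_s))=0$ by continuity, i.e. $\E[h(X_s)]=0$; this is precisely the Skorokhod condition $\int_0^t\E[h(X_s)]\,\d K_s=0$. Conversely, for any flat deterministic solution the constraint and monotonicity force $K_s\geq G_s$, while the Skorokhod condition forces $\d K$ to charge only times where $K_s=G_s$; the standard minimality argument (if $K_{t}>\sup_{s\leq t}G_s$, then $K$ is constant on the last excursion above $G$, so $K_{t}=K_{\tau}=\sup_{s\leq\tau}G_s\leq\sup_{s\leq t}G_s$, a contradiction) then pins $K_t=\sup_{s\leq t}G_s$. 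The main obstacle is the coupling between the two unknowns: because $U^K$ is itself the output of a nonlinear SDE driven by the very $K$ we are solving for, the contraction constant must be extracted carefully by combining~\ref{H:h} with the SDE stability estimates, and one must verify that the bi-Lipschitz lower bound simultaneously guarantees a finite root $G_t$ and controls its time regularity, so that the Skorokhod/minimality characterization is meaningful.
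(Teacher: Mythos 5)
Your proposal is correct, and it reaches the result by a genuinely different fixed-point decomposition than the paper. The paper runs a Picard iteration directly on the process $X$: given a candidate $\tilde X$, it freezes the coefficients at $\tilde X$ to build $\tilde U$, reads off $K$ from the sup-inf formula, sets $X=\tilde U+K$, and shows the map $\Xi:\tilde X\mapsto X$ is a contraction for the norm $\E[\sup_{t\le T}|\cdot|^2]^{1/2}$ on small time intervals. You instead take the deterministic reflector $K$ as the unknown: for each admissible $K$ you solve the (classical, Lipschitz) SDE for $X^K$ exactly, and contract the map $\Phi(K)_t=\sup_{s\le t}G_0(\mathcal{L}(U^K_s))$ in the sup norm on a space of deterministic functions. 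Both arguments hinge on the same key estimate, namely the $\tfrac{M}{m}$-Lipschitz continuity of $G_0$ with respect to $W_1$ (the paper's Lemma~\ref{lemme:regG}) combined with a Gronwall stability bound for the SDE part; your bound $\sup_{s\le t}\E[|U^1_s-U^2_s|^2]\le C\,t\,\sup_{s\le t}|K^1_s-K^2_s|^2$ is the correct analogue of the paper's estimate on $\sup_t|K_t-K'_t|$. What your route buys is twofold: the contraction lives in a simpler (deterministic, finite-dimensional-target) function space, and--more substantively--you make explicit the minimality/excursion argument showing that \emph{any} flat deterministic solution must satisfy $K_t=\sup_{s\le t}G_0(\mu_s)$, which is exactly the step needed to conclude that every solution is a fixed point and which the paper's sketch leaves to the cited reference \cite{briand_bsdes_2016}. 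What the paper's route buys is that one never has to solve an auxiliary SDE per iterate, and the representation formula for $K$ is built into the construction from the start. The only points to polish in your write-up are routine: checking that $\Phi$ preserves the class of continuous non-decreasing functions vanishing at $0$ (which you address via the $W_1$-continuity of $t\mapsto\mathcal{L}(U^K_t)$), and the restart of the localization at times $kt_0$, where the constraint guarantees $\E[h(X_{kt_0})]\ge 0$ so the construction can be iterated exactly as in the paper.
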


\begin{proof}
	The proof for the case of backward SDEs is given in~\cite{briand_bsdes_2016}. For the ease of the reader, we sketch the proof for the forward case.

	Let $\tilde{X}$ be a given continuous process such that, for all $t>0$, $\E\left[\sup_{s\leq t}\left|\tX_s \right|^2\right] < +\infty$. We set
	\begin{equation*}
	\tU_t = X_0 + \int_0^t b(\tX_s)\d s + \int_0^t \sigma(\tX_s)\d B_s,
	\end{equation*}
	and define the function $K$ by setting
	\begin{equation}
		\label{eq:dfkt}
	K_t = \sup_{s \leq t } \inf\{x\geq 0:\ \E \left[h(x+\tU_s)\right]\geq 0\}.
	\end{equation}

	The function $K$ being given, let us define the process $X$ by the formula
	\begin{equation*}
	X_t = X_0 + \int_0^t b(\tX_s)\d s + \int_0^t \sigma(\tX_s)\d B_s + K_t,
	\end{equation*}
	Let us check that $(X,K)$ is the solution to~\eqref{eq:main}. By definition of $K$, $\E\left[h(X_t)\right]\geq 0$ and we have, $\d K$ almost everywhere,
	\begin{equation*}
		K_t = \sup_{s\le t}\inf\{x\geq 0:\ \E \left[h(x+\tU_s)\right]\geq 0\} >0,
	\end{equation*}
	so that $\E\left[h(X_t)\right]=\E\left[h\left(\tU_t + K_t\right)\right]=0$ $dK$-a.e. since $h$ is continuous and nondecreasing.

	Next, consider the map $\Xi$ which associates to $\tX$ the solution $X$ of \eqref{eq:main}. Let us show that $\Xi$ is a contraction. Let $\tX$ and $\tX'$ be given, and define $K$ and $K'$ as above, using the same Brownian motion. We have from Cauchy-Schwartz and  Doob inequality
	\begin{equation*}
		\E\left[\sup_{t\leq T} |X_t-X_t'|^2\right] \leq  ( T||b||_{\Lip} +2 ||\sigma||_{\Lip}) \E\left[ \int_0^T |\tX_s-\tX_s'|^2 \d s\right] +2\, \sup_{t\leq T}|K_t-K'_t|^2.
	\end{equation*}
	From the representation~\eqref{eq:dfkt} of the process $K$ and Lemma \ref{lemme:regG}, we have that
	\begin{equation*}
		\sup_{t\leq T}|K_t-K'_t|^2 \leq \frac{M}{m}\, \E \left[\sup_{t\leq T}|\tU_t - \tU'_t|^2\right] \leq C\, \left( T||b||_{\Lip} +2 ||\sigma||_{\Lip}\right) \E \left[\int_0^T |\tX_s-\tX_s'|^2 \d s\right].
	\end{equation*}
	Therefore,
	\begin{equation*}
		\E\left[\sup_{t\leq T}\left|X_t-X_t'\right|^2\right] \leq  C (1+T) \E \left[\int_0^T \left|\tX_s-\tX_s'\right|^2 \d s\right]\leq  C (1+T)T\,  \E\left[\sup_{t\leq T}\left|\tX_t-\tX_t'\right|^2\right].
	\end{equation*}

	Hence, there exists a positive $\mathcal{T}$, depending on on $b$, $\sigma$ and $h$ only, such that for all $T<\mathcal{T}$, the map $\Xi$ is a contraction. We first deduce the existence and uniqueness of the solution on $[0,\mathcal{T}]$ and then on $\R^+$ by iterating the construction.
\end{proof}

\begin{remarque}
	\label{rem:markovK}
Note that from this construction, we deduce that for all positive $r$:
$$K_{t+r} - K_t = \sup_{0\leq s \leq r} \inf \left\{x\geq 0\ :\ \E\left[h\left(x+X_t+\int_t^{t+s} b(X_u)\d u + \int_t^{t+s}\sigma(X_u)\d B_u\right)\right]\right\}.$$
It then follows that the unique solution of \eqref{eq:main} is a Markov process on the space $\R \times \mathcal{P}(\R)$, where $\mathcal{P}(\R)$ the space of probability measures on $\R$.
\end{remarque}

In the following, we make an intensive use of this representation formula of the process $K$. Define the function
\begin{equation}
	\label{eq:defH}
H : \R\times \mathcal{P}(\R) \ni (x,\nu) \mapsto H(x,\nu) = \int h(x+z)\nu(dz).
\end{equation}
We will need also the inverse function in space of $H$ evaluated at $0$ namely:
\begin{equation}\label{eq:}
	\bar G_0 : \mathcal{P}(\R) \ni \nu \mapsto \inf\left\{x\in\R :\ H(x,\nu) \geq 0 \right\},
\end{equation}
as well as $G_0$, the positive part of $G_0$: 
\begin{equation}\label{eq:defG0}
G_0 : \mathcal{P}(\R) \ni \nu \mapsto \inf\left\{x \geq 0\ :\ H(x,\nu) \geq 0 \right\}.
\end{equation}
With these notations, denoting by $(\mu_t)_{0\leq t \leq T}$ the family of marginal laws of $(U_{t})_{0\leq t \leq T}$ we have
\begin{equation}
K_t = \sup_{s\leq t} G_0(\mu_s).\label{eq:repdeKG}
\end{equation}

We start by studying some properties of $H$ and $G_0$.

\begin{lemme}\label{lem:regH} Under \ref{H:h} we have:
\begin{enumerate}[(i)]
\item For all $\nu$ in $\mathcal{P}(\R)$, the mapping $H(\cdot,\nu) : \R \ni x \mapsto H(x,\nu)$ is a bi-Lipschitz function, namely:
\begin{equation}\label{reg:Hspace}
\forall x,y \in \R,\ m|x-y|\leq |H(x,\nu)-H(y,\nu)| \leq M|x-y|
\end{equation}
\item For all $x$ in $\R$, the mapping $H(x,\cdot) : \mathcal{P}(\R) \ni \nu \mapsto H(x,\nu)$ satisfies the following Lipschitz estimate:
\begin{equation}\label{reg:Hmeasure}
\forall \nu,\nu' \in \mathcal{P}(\R),\ |H(x,\nu)-H(x,\nu')| \leq \Big |\int h(x+\cdot) (\d\nu - \d\nu')\Big |.
\end{equation}
\end{enumerate}
\end{lemme}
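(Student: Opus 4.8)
The plan is to reduce both estimates to pointwise properties of $h$ and then integrate against the relevant measure; neither part requires more than the structural hypothesis \ref{H:h}.

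For part (i), I would fix $\nu \in \mathcal{P}(\R)$ and assume without loss of generality that $x \geq y$ (the other case being symmetric). Writing the difference as
\[
H(x,\nu) - H(y,\nu) = \int \big( h(x+z) - h(y+z) \big)\, \nu(\d z),
\]
I would first note that, since $h$ is increasing, the integrand is nonnegative for every $z$, so the left-hand side is itself nonnegative and coincides with its absolute value. The pointwise bi-Lipschitz bounds from \ref{H:h}, namely $m(x-y) \leq h(x+z) - h(y+z) \leq M(x-y)$ (which hold for each $z$ because $(x+z)-(y+z) = x-y$), then pass directly to the integral; using that $\nu$ has total mass one gives
\[
m(x-y) \leq H(x,\nu) - H(y,\nu) \leq M(x-y),
\]
which is exactly \eqref{reg:Hspace}.

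For part (ii), I would simply invoke linearity of the integral: for fixed $x$,
\[
H(x,\nu) - H(x,\nu') = \int h(x+z)\, \nu(\d z) - \int h(x+z)\, \nu'(\d z) = \int h(x+\cdot)\,(\d\nu - \d\nu'),
\]
and taking absolute values yields \eqref{reg:Hmeasure}, in fact with equality rather than mere inequality.

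The main point to be careful about---though it can hardly be called an obstacle---is the monotonicity argument in (i): it is what ensures the integrand $h(x+z)-h(y+z)$ does not change sign, so that the pointwise lower bound survives integration and produces the bi-Lipschitz \emph{lower} bound for $H(\cdot,\nu)$. Without the monotonicity of $h$ one would recover only the Lipschitz upper bound.
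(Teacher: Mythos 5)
Your proof is correct and is exactly the ``straightforward from the definition'' argument that the paper leaves implicit: integrate the pointwise bi-Lipschitz bounds for $h$ against the probability measure $\nu$ for (i), and use linearity of the integral for (ii). You are also right to flag that the monotonicity of $h$ is the one non-trivial ingredient, since it keeps the integrand $h(x+z)-h(y+z)$ of constant sign and lets the pointwise lower bound $m|x-y|$ survive integration.
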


\begin{proof}
	The proof is straightforward from the definition of $H$ see~\eqref{eq:defH}.
\end{proof}

Note that thanks to Monge-Kantorovitch Theorem, assertion \eqref{reg:Hmeasure} implies that for all $x$ in $\R$, the function $H(x,\cdot)$ is Lipschitz continuous w.r.t. the Wasserstein-1 distance. Indeed, for two probability measures $\nu$ and $\nu'$, the Wasserstein-1 distance between $\nu$ and $\nu'$ is defined by:
$$W_1(\nu,\nu') = \sup_{\varphi\ 1-{\rm Lipschitz}}\left|\int \varphi (\d \nu-\d \nu')\right| = \inf_{X\sim\nu\ ;\ Y\sim \nu'}\E[|X-Y|].$$
Therefore
\begin{equation}
	\label{reg:Hmeasurewass}
\forall \nu,\nu' \in \mathcal{P}(\R),\ |H(x,\nu)-H(x,\nu')| \leq M W_1(\nu,\nu').
\end{equation}

Then, we have the following result about the regularity of $G_0$:

\begin{lemme}\label{lemme:regG}
Under \ref{H:h}, the mapping $G_0 : \mathcal{P}(\R) \ni \nu \mapsto G_0(\nu)$ is Lipschitz-continuous in the following sense:
$$
|G_0(\nu) - G_0(\nu')| \leq \frac{1}{m}  \left\vert\int h(\bar{G}_0(\nu)+\cdot) (\d\nu - \d\nu')\right\vert,
$$
where $\bar{G}_0(\nu)$ is the inverse of $H(\cdot,\nu)$ at point 0. In particular,
\begin{equation}\label{regG:wass}
|G_0(\nu) - G_0(\nu')| \leq \frac{M}{m}  W_1(\nu,\nu').
\end{equation}
\end{lemme}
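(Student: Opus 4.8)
The plan is to first prove the stated estimate for the unconstrained inverse $\bar{G}_0$ and then deduce the result for $G_0=(\bar{G}_0)^+$ by the fact that the positive-part map is a contraction. Before anything else I would record the elementary structural facts that make $\bar{G}_0$ well defined. Under \ref{H:h} the map $H(\cdot,\nu)$ is continuous and, by the lower bound in \eqref{reg:Hspace} with $m>0$, strictly increasing; since $h$ is bi-Lipschitz it satisfies $h(x)\to\pm\infty$ as $x\to\pm\infty$, hence $H(x,\nu)\to\pm\infty$ as $x\to\pm\infty$ as well. Consequently $\{x\in\R:\ H(x,\nu)\geq 0\}$ is a closed half-line $[\bar{G}_0(\nu),\infty)$ and $\bar{G}_0(\nu)$ is the unique zero of $H(\cdot,\nu)$, that is $H(\bar{G}_0(\nu),\nu)=0$. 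A short case distinction on the sign of $\bar{G}_0(\nu)$ then shows $G_0(\nu)=\max(\bar{G}_0(\nu),0)=(\bar{G}_0(\nu))^+$.

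Next I would carry out the core step, which rests on the observation that both inverses are exact zeros of their respective functions. Writing $a=\bar{G}_0(\nu)$ and $a'=\bar{G}_0(\nu')$, so that $H(a,\nu)=0$ and $H(a',\nu')=0$, I apply the lower Lipschitz bound of \eqref{reg:Hspace} to $H(\cdot,\nu')$ to get $m\,|a-a'|\leq |H(a,\nu')-H(a',\nu')|$. Since $H(a',\nu')=0=H(a,\nu)$, the right-hand side equals $|H(a,\nu')-H(a,\nu)|$, and by the very definition \eqref{eq:defH} of $H$ this is exactly $\left|\int h(a+\cdot)\,(\d\nu-\d\nu')\right|$. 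This yields
\begin{equation*}
|\bar{G}_0(\nu)-\bar{G}_0(\nu')|\leq \frac{1}{m}\left|\int h(\bar{G}_0(\nu)+\cdot)\,(\d\nu-\d\nu')\right|,
\end{equation*}
which is the announced estimate for $\bar{G}_0$. Note that the asymmetry in the integrand (the anchor $\bar{G}_0(\nu)$ rather than $\bar{G}_0(\nu')$) is simply a consequence of choosing to apply the bi-Lipschitz bound to $H(\cdot,\nu')$; anchoring at $H(\cdot,\nu)$ would produce the symmetric variant with $\bar{G}_0(\nu')$.

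Finally I would transfer the bound from $\bar{G}_0$ to $G_0$. Since $t\mapsto t^+$ is $1$-Lipschitz and $G_0=(\bar{G}_0)^+$, we have $|G_0(\nu)-G_0(\nu')|\leq|\bar{G}_0(\nu)-\bar{G}_0(\nu')|$, which combined with the previous display gives the first claimed inequality. The Wasserstein estimate \eqref{regG:wass} then follows immediately by bounding the integral with \eqref{reg:Hmeasurewass} applied at $x=\bar{G}_0(\nu)$, namely $\left|\int h(\bar{G}_0(\nu)+\cdot)\,(\d\nu-\d\nu')\right|\leq M\,W_1(\nu,\nu')$. I do not expect a serious obstacle here; the only point requiring care is the opening structural argument that guarantees $\bar{G}_0(\nu)$ is a genuine zero of $H(\cdot,\nu)$, since the entire estimate hinges on being able to replace $H$ at these inverses by $0$.
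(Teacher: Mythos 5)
Your proof is correct and follows essentially the same route as the paper's: bound $|G_0(\nu)-G_0(\nu')|$ by $|\bar G_0(\nu)-\bar G_0(\nu')|$ via the $1$-Lipschitz positive part, apply the lower bi-Lipschitz bound of $H(\cdot,\nu')$, and use that $H(\bar G_0(\eta),\eta)=0$ for both measures to convert the difference into $\bigl|\int h(\bar G_0(\nu)+\cdot)(\d\nu-\d\nu')\bigr|$, then conclude \eqref{regG:wass} from \eqref{reg:Hmeasurewass}. The only difference is cosmetic (you spell out why $\bar G_0(\nu)$ is an exact zero of $H(\cdot,\nu)$, which the paper takes for granted, and you apply the positive-part contraction at the end rather than the start).
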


\begin{proof}
Let $\nu$ and $\nu'$ be two probability measures on $\R$.  From Lipschitz regularity of the positive part, we have
\begin{equation*}
|G_0(\nu) - G_0(\nu')| \leq  |\bar{G}_0(\nu) - \bar{G}_0(\nu')|\\
\end{equation*}
Next, using the bi-Lipschitz in space property of $H$, we get that for any $\eta$ in $\mP(\R)$:
\begin{equation*}
|\bar{G}_0(\nu) - \bar{G}_0(\nu')| \leq  \frac{1}{m} |H(\bar{G}_0(\nu),\eta) - H(\bar{G}_0(\nu'),\eta)|.
\end{equation*}
By definitions of $H$ and $G_0$ we have, for all $\eta$ in $\mathcal{P}(\R)$:  $H(\bar{G}_0(\eta),\eta) = 0$. Hence, by choosing $\eta=\nu'$:
\begin{eqnarray*}
|G_0(\nu) - G_0(\nu')|  &\leq &  \frac{1}{m} |H(\bar{G}_0(\nu),\nu')  - H(\bar{G}_0(\nu'),\nu')|\\
&\leq &\frac{1}{m} |H(\bar{G}_0(\nu),\nu') - H(\bar{G}_0(\nu),\nu)| + |H(\bar{G}_0(\nu),\nu) - H(\bar{G}_0(\nu'),\nu')|\\
&\leq & \frac{1}{m} \left| \int h(\bar{G}_0(\nu)+\cdot) (\d\nu-\d\nu')\right|.
\end{eqnarray*}
The last assertion immediately follows from \eqref{reg:Hmeasurewass}.
%
\end{proof}

We close this section by giving some additional properties of the solution $(X,K)$ of \eqref{eq:main}.

Let $\mathcal{L}$ be the linear partial operator of second order defined by
\begin{equation}\label{eq:defmL}
\mathcal{L}f(x) := b(x)\frac{\p}{\p x}f(x) + \sigma\sigma^*(x)\frac{\p^2}{\p x^2}f(x),
\end{equation}
for any twice continuously differentiable function $f$.
\begin{proposition}\label{prop:densityK}
Suppose that assumptions \ref{H:eds}, \ref{H:h} and \ref{H:smooth} hold and let $(X,K)$ denotes the unique deterministic flat solution to \eqref{eq:main}. Then the Stieljes measure $\d K$ is absolutely continuous with respect to the Lebesgue measure with density
\begin{equation}
k : t \ni \R^{+} \longmapsto \frac{\left(\E \left[\mathcal{L}h(X_t)\right]\right)^-}{\E\left[h'(X_t)\right]} \mathbf{1}_{\E \left[h(X_t)\right] = 0}.
\end{equation}
\end{proposition}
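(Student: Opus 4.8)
The plan is to reduce the statement to a one-dimensional Skorokhod problem for the scalar function $f(t):=\E[h(X_t)]$, whose reflection local time can be computed explicitly once its absolute continuity is established. First I would apply Itô's formula to $h(X_t)$: since $h$ is $C^2$ with bounded derivatives by \ref{H:smooth} and $K$ is a continuous, deterministic function of bounded variation (so it contributes neither a martingale nor a bracket term), one gets
\begin{equation*}
h(X_t) = h(X_0) + \int_0^t \mathcal{L}h(X_s)\,\d s + \int_0^t h'(X_s)\sigma(X_s)\,\d B_s + \int_0^t h'(X_s)\,\d K_s.
\end{equation*}
The stochastic integral is a true martingale because $h'$ is bounded, $\sigma$ is Lipschitz and $X$ has enough moments by \ref{H:moments} together with standard SDE estimates, so taking expectations yields
\begin{equation*}
f(t) = f(0) + \int_0^t a(s)\,\d s + \int_0^t \gamma(s)\,\d K_s, \qquad a(s):=\E[\mathcal{L}h(X_s)],\quad \gamma(s):=\E[h'(X_s)].
\end{equation*}
Here $\gamma(s)\in[m,M]$ because $h$ is bi-Lipschitz (hence $m\le h'\le M$ a.e.), and $a,\gamma$ are continuous in $s$ thanks to the $\mathbb{L}^2$-continuity of $s\mapsto X_s$ and the boundedness of $h',h''$.

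Next I would read this relation as a Skorokhod problem with a time-dependent reflection coefficient. Set $y(t):=f(0)+\int_0^t a(s)\,\d s$, which is absolutely continuous with $y'=a$ and $y(0)=\E[h(X_0)]\ge 0$, and $L(t):=\int_0^t \gamma(s)\,\d K_s$, which is continuous, nondecreasing and satisfies $L(0)=0$. Then $f=y+L\ge 0$, and since $\d K$ is carried by $\{f=0\}$ by the Skorokhod condition and $\gamma>0$, the identity $\int_0^T f\,\d K=0$ gives $\int_0^T f\,\d L=\int_0^T \gamma f\,\d K=0$. By uniqueness of the one-dimensional Skorokhod map, $L$ is the reflected running part of the input,
\begin{equation*}
L(t) = \Big(\sup_{s\le t}(-y(s))\Big)^+.
\end{equation*}

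The heart of the argument is then the regularity of this running maximum. Because $y$ is absolutely continuous, the running maximum $R(t):=\sup_{s\le t}(-y(s))$ of the absolutely continuous function $-y$ is itself absolutely continuous, its Stieltjes measure is carried by the record set $\{t:\,-y(t)=R(t)\}$, and $R'(t)=-a(t)\,\mathbf{1}_{\{-y(t)=R(t)\}}$ for a.e. $t$, with the additional feature that $-a\ge 0$ a.e. on the record set (a strictly decreasing $-y$ cannot sit on its own maximum over a set of positive measure). After checking that this record set coincides, up to a Lebesgue-null set, with $\{f=0\}$, it follows that $L=R^+$ is absolutely continuous with $L'(t)=a(t)^-\,\mathbf{1}_{\{f(t)=0\}}$ a.e. Dividing by $\gamma$, which is bounded below by $m>0$, transfers absolute continuity from $L$ to $K$ and yields
\begin{equation*}
\d K_t = \frac{L'(t)}{\gamma(t)}\,\d t = \frac{\big(\E[\mathcal{L}h(X_t)]\big)^-}{\E[h'(X_t)]}\,\mathbf{1}_{\{\E[h(X_t)]=0\}}\,\d t,
\end{equation*}
which is the desired density.

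The main obstacle I anticipate is precisely the running-maximum regularity step: showing that $R$ has no singular part, justifying the pointwise derivative formula, and establishing the a.e. sign condition $-a\ge 0$ on the record set. Equivalently, in a more hands-on route one would Lebesgue-decompose $\d K$ and prove that its singular part vanishes; the delicate point there is that a nonnegative singular measure carried by the level set $\{f=0\}$ of the continuous function $f$ must be zero, which is exactly where the absolute continuity of the input $y=f(0)+\int_0^\cdot a\,\d s$ is essential. The remaining ingredients — the Itô expansion, the martingale property, and the identification of the Skorokhod local time — are routine given \ref{H:eds}, \ref{H:h}, \ref{H:moments} and \ref{H:smooth}.
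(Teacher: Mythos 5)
Your proof is correct, and while it shares its point of departure with the paper's --- both apply Itô's formula to $h(X_t)$, take expectations, and work from the scalar identity $\E[h(X_{t+r})]-\E[h(X_t)]=\int_t^{t+r}\E[\mathcal{L}h(X_s)]\,\d s+\int_t^{t+r}\E[h'(X_s)]\,\d K_s$ --- the way you extract the density is genuinely different. The paper argues locally at each $t$ by a case analysis on the signs of $\E[h(X_t)]$ and $\E[\mathcal{L}h(X_t)]$: in the non-saturated cases $\d K$ vanishes on a neighbourhood, and in the saturated case it asserts that minimality forces $\E[h(X_{t+r})]=0$ on a whole small interval, then divides by $r$ and passes to the limit to read off $\d K_t=-\E[\mathcal{L}h(X_t)]/\E[h'(X_t)]\,\d t$. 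You instead recast the identity as a deterministic Skorokhod problem for $f(t)=\E[h(X_t)]$ with absolutely continuous input $y=f(0)+\int_0^\cdot a$ and local time $L=\int_0^\cdot\gamma\,\d K$, invoke the explicit Skorokhod map $L=\bigl(\sup_{s\le\cdot}(-y(s))\bigr)^+$, and use the fact that the running maximum of an absolutely continuous function is absolutely continuous, with $R'=-a\,\mathbf{1}_{\{-y=R\}}$ and $-a\ge 0$ a.e.\ on the record set (at a record point where $R$ and $y$ are both differentiable, $R'=-y'$ and $R'\ge 0$). Your route buys rigor: it replaces the paper's informal steps (that the constraint saturates on an interval, that $\d K([t,t+r])/r$ converges, and that the pointwise case analysis pieces together into global absolute continuity, with the boundary case $\E[h(X_t)]=\E[\mathcal{L}h(X_t)]=0$ glossed over) by a standard differentiation argument at record points; the price is the bookkeeping you flag yourself, namely that the record set agrees with $\{f=0\}$ only up to the set $\{R=0\}$, on which $R'=0$ a.e.\ anyway. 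Two small remarks: you do not need \ref{H:moments}, which is not among the hypotheses of the proposition --- square integrability of $X_0$ from \ref{H:eds} together with Proposition~\ref{prop:polymom} already makes the stochastic integral a true martingale; and your $L=(\sup(-y))^+$ is the linear Skorokhod map for the scalar constraint, not the nonlinear representation $K_t=\sup_{s\le t}G_0(\mu_s)$ of~\eqref{eq:repdeKG}, though the two are of course consistent.
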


\begin{proof}
	Let $t$ in $[0,T]$. Then, thanks to Itô's Formula we have for all positive $r$:

	\begin{equation}\label{eq:repKinter}
	\E \int_t^{t+r} h'(X_s) \d K_s = \E h(X_{t+r})-\E h(X_t) - \int_t^{t+r}\E \mathcal{L}h(X_s) \d s,
	\end{equation}
	where $\mathcal{L}$ is given by \eqref{eq:defmL}.

	Suppose, at the one hand, that $\E h(X_t)>0$. Then, by the continuity of $h$ and $X$, we get that there exists a positive $\mathcal{R}$ such that for all $r \in [0,\mathcal{R}]$, $E h(X_{t+r})>0$. This implies in particular, from the definition of $K$, that $\d K([t,t+r]) = 0$ for all $r$ in $[0,\mathcal{R}]$.

	At the second hand, suppose that  $\E h(X_t)=0$, then two cases arise. Let us first assume that $\E \mathcal{L}h(X_t) >0$. Hence, we can find a positive $\mathcal{R}'$ such that for all $r \in [0,\mathcal{R}']$, $E \mathcal{L}h(X_{t+r})>0$. We thus deduce from our assumptions and \eqref{eq:repKinter} that $\E h(X_{t+r}) >0$ for all $r$ in $(0,\mathcal{R}']$. Therefore, $\d K([t,t+r]) = 0$ for all $r$ in $[0,\mathcal{R}']$. Suppose next that $E \mathcal{L}h(X_{t+r})\leq 0$. By continuity again, there exists a positive $\mathcal{R}''$ such that for all $r$ in $[0,\mathcal{R}'']$ it holds $E \mathcal{L}h(X_{t+r})\leq 0$. Since $\E h(X_{t+r})$ must be positive on this set, we could have to compensate and $K_{t+r}$ is then positive for all $r$ in $[0,\mathcal{R}'']$. Moreover, the compensation must be minimal \emph{i.e.} such that $\E h(X_{t+r})=0$. Equation \eqref{eq:repKinter} becomes:
	\begin{equation*}
	\E \int_t^{t+r} h'(X_s) \d K_s = - \int_t^{t+r}\E \mathcal{L}h(X_s) \d s,
	\end{equation*}
	on $[0,\mathcal{R}'']$. Dividing both sides by $r$ and taking the limit $r \to 0$ gives (by continuity):
	\begin{equation*}
	\d K_t  = -\frac{\E \mathcal{L}h(X_s)}{\E h'(X_s)} \d t.
	\end{equation*}

	Thus, $\d K$ is absolutely continuous w.r.t. the Lebesgue measure with density:
	\begin{equation}
	k_t = \frac{(\E \mathcal{L}h(X_s))^-}{\E h'(X_s)} \mathbf{1}_{\E h(X_t) = 0}.
	\end{equation}
\end{proof}

\begin{remarque}
	This justifies, at least under the smoothness assumption \ref{H:smooth} on the constraint function $h$, the non-negative hypothesis imposed on $h'$.
\end{remarque}

Finally, we have the following result concerning the moments of the solution of \eqref{eq:main}.

\begin{proposition}\label{prop:polymom}
Suppose that assumptions \ref{H:eds} and \ref{H:h} hold. Then, for all $p \geq 1$, there exists a positive constant $K_p$, depending on $T$, $b$, $\sigma$ and $h$ such that 
\begin{equation*}
	\E\left[\sup\nolimits_{t \leq T} |X_t|^p\right] \leq K_p \left(1+ \E\left[|X_0|^p\right]\right).
\end{equation*}
Moreover, if $X_0\in\mathbb{L}^p$ for some $p\geq 1$, there exists a constant C, depending on $p$, $T$, $b$, $\sigma$ and $h$ such that
\begin{equation*}
	\forall 0\leq s\leq t\leq T, \qquad \E\left[|X_t-X_s|^p\right] \leq C\, |t-s|^{p/2}.
\end{equation*}
\end{proposition}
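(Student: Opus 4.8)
The plan is to control the solution $X$ by first controlling the reflection term $K$, and then to apply standard SDE moment estimates to the representation $X_t = U_t + K_t$, where $U$ is given by \eqref{eq:defu} and $K_t = \sup_{s\le t} G_0(\mu_s)$ via \eqref{eq:repdeKG}. The key preliminary observation is that $K$ is itself bounded in terms of the moments of $U$. Indeed, from Lemma~\ref{lemme:regG} applied with $\nu = \mu_s$ and $\nu' = \delta_0$ (the Dirac mass at $0$, for which $G_0(\delta_0)$ is a fixed constant depending only on $h$ through $\E[h(X_0)]\ge 0$), together with \eqref{regG:wass}, one gets $0 \le K_t \le \frac{M}{m}\, W_1(\mu_t,\delta_0) \le \frac{M}{m}\,\E[|U_t|]$. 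More crudely, $\sup_{t\le T} K_t \le C\,\E[\sup_{t\le T}|U_t|]$, so $K$ is a deterministic function bounded by the first moment of $U$; and its growth can be compared to that of $U$ on any subinterval.

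Granting this, the first main step is the moment bound. I would write $|X_t|^p \le C_p(|U_t|^p + K_t^p)$ and, using the bound on $K$ just derived together with Jensen, reduce everything to estimating $\E[\sup_{t\le T}|U_t|^p]$. But $U$ depends on $X$ through $b(X_s)$ and $\sigma(X_s)$ in its own definition \eqref{eq:defu}, so one does not get a closed estimate on $U$ directly; instead I would set up a Gronwall argument on $\phi(t) := \E[\sup_{s\le t}|X_s|^p]$. Applying the Burkholder–Davis–Gundy inequality to the stochastic integral, the Lipschitz (hence linear-growth) bounds on $b$ and $\sigma$, and the estimate $\sup_{s\le t} K_s^p \le C\,\E[\sup_{s\le t}|X_s|^p]^{\,?}$ — more carefully, bounding $K_t$ by the running first moment of $U$ and then $U$ by $X$ — one arrives at an inequality of the form
\begin{equation*}
\phi(t) \le C_p\Big(1 + \E[|X_0|^p]\Big) + C_p\int_0^t \phi(s)\,\d s,
\end{equation*}
after which Gronwall's lemma closes the argument and yields the constant $K_p$. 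The point that needs care is that the contribution of $K$ must be folded into the Gronwall integral rather than left as a free term; this is where the Lipschitz control of $G_0$ in Lemma~\ref{lemme:regG} does the work, since it lets $K$ inherit the time-regularity of $U$.

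For the second statement, the time increment, I would use the Markov-type representation from Remark~\ref{rem:markovK}, which gives
\begin{equation*}
K_t - K_s = \sup_{0\le r\le t-s} G_0\big(\mathrm{Law}(X_s + \textstyle\int_s^{s+r} b(X_u)\d u + \int_s^{s+r}\sigma(X_u)\d B_u)\big),
\end{equation*}
so that by Lemma~\ref{lemme:regG} the increment $K_t - K_s$ is bounded by $\frac{M}{m}$ times the Wasserstein distance between the law of $X_s$ and that of $X_s$ perturbed over $[s,t]$, hence by $C\,\E[|\int_s^t b(X_u)\d u + \int_s^t \sigma(X_u)\d B_u|]$. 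Writing $X_t - X_s = \int_s^t b(X_u)\d u + \int_s^t \sigma(X_u)\d B_u + (K_t-K_s)$, raising to the $p$-th power, and applying BDG to the diffusive part together with the already-established moment bound $\E[\sup_{u\le T}|X_u|^p]<\infty$ (which controls $\E[|b(X_u)|^p]$ and $\E[|\sigma(X_u)|^p]$ uniformly), the drift term contributes $|t-s|^p$ and the martingale term contributes $|t-s|^{p/2}$, the latter dominating for $|t-s|\le T$. The reflection increment is bounded by the first moment of the same quantities and is therefore of the same or smaller order. Collecting terms gives $\E[|X_t-X_s|^p]\le C|t-s|^{p/2}$.

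I expect the main obstacle to be the first step: obtaining a \emph{closed} Gronwall inequality for $\phi$, since $U$ is not an autonomous object but is slaved to $X$, and the nonlinear reflection $K$ couples all marginals through a supremum in time. The crucial insight making this tractable is precisely that $K$ is deterministic and Lipschitz-controlled by the law of $U$ (Lemma~\ref{lemme:regG}), which prevents the reflection from introducing any pathwise irregularity beyond that of the driving SDE and lets every estimate be reduced to classical SDE arguments.
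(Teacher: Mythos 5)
Your proof is correct and follows essentially the same route as the paper: bound $K$ through the Lipschitz estimate on $G_0$ from Lemma \ref{lemme:regG}, close a Gronwall inequality on $\E[\sup_{s\le t}|X_s|^p]$ for the moment bound, and use the Markov-type representation of $K_t-K_s$ from Remark \ref{rem:markovK} together with the Burkholder--Davis--Gundy inequality for the increment estimate. The only (harmless) slip is in your comparison with $\delta_0$: the correct inequality is $K_t\le G_0(\delta_0)+\frac{M}{m}\,\E[|U_t|]$ rather than $K_t\le\frac{M}{m}\,\E[|U_t|]$, whereas the paper compares $\mu_t$ with $\mu_0$ and uses $G_0(\mu_0)=0$ (a consequence of $\E[h(X_0)]\ge 0$), so that no additive constant appears.
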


\begin{proof}
	We have 
	\begin{equation*}
	\E \sup_{t\leq T}|X_t|^p \leq 4^{p-1}\left\{ \E |X_0|^p + \E  \sup_{t \leq T}\left( \int_0^t |b(X_s)| \d s \right)^p + \E \sup_{t \leq T}\left|\int_0^t \sigma(X_s)\d B_s \right|^p + K_T^p\right\}.
	\end{equation*}
	Let us first consider the last term $K_T=\sup_{t \leq T}| G_0(\mu_s)|^p$. From the Lipschitz property of lemma \ref{lemme:regG} of $G_0$, and the definition of the Wasserstein metric we have
	$$
	\forall t\geq 0,\ |G_0(\mu_t)| \leq \frac{M}{m}\, \E[\left|U_t-U_0|\right],
	$$
	since $G_0(\mu_0)=0$ as $\E\left[h(X_0)\right]\geq 0$ and where $U$ is defined by \eqref{eq:defu}. Therefore
	\begin{equation*}
	|\sup_{t \leq T} G_0(\mu_s)|^p  \leq 2^{p-1}\left(\frac{M}{m}\right)^p \left\{ \E \sup_{t \leq T} \left( \int_0^t |b(X_s)| \d s \right)^p + \E \sup_{t \leq T}\left| \int_0^t \sigma(X_s) \d B_s \right|^p\right\},
	\end{equation*}
	and so 
	\begin{equation*}
	\E\left[\sup_{t\leq T}|X_t|^p\right] \leq C(p,M,m) \E\left[ |X_0|^p +  \sup_{t \leq T}\left( \int_0^t |b(X_s)| \d s \right)^p + \sup_{t \leq T}\left|\int_0^t \sigma(X_s)\d B_s \right|^p\right].
	\end{equation*}
	The first part of the result follows from standard computations since $b$ and $\sigma$ are Lipschitz continuous.
	
	For the second part, the key observation is Remark~\ref{rem:markovK}:
	\begin{equation*}
		K_t-K_s = \sup_{s\leq r \leq t} \inf \left\{x\geq 0\ :\ \E\left[h\left(x+X_s+\int_s^{r} b(X_u)\d u + \int_s^{r}\sigma(X_u)\d B_u\right)\right]\right\}.
	\end{equation*}
	Therefore, since $\E\left[h(X_s)\right]\geq 0$, $G_0(X_s)=0$ and we have
	\begin{align*}
		K_t-K_s & = \sup_{s\leq r \leq t} \left| G_0\left( X_s+\int_s^{r} b(X_u)\d u + \int_s^{r}\sigma(X_u)\d B_u\right)-G_0(X_s) \right|, \\
		& \leq \frac{M}{m}\, \sup_{0\leq r \leq t} \E\left[\left| \int_s^{r} b(X_u)\d u + \int_s^{r}\sigma(X_u)\d B_u \right|\right],
	\end{align*}
	and the result follows from standard computations.
\end{proof}

\section{Mean reflected SDE  as the limit of an interacting reflected particles system}\label{sec:PMRSDE}

Having in mind the notations defined in the beginning of Section \ref{sec:EUPS} and especially equation \eqref{eq:repdeKG}, we can write the unique solution of the SDE \eqref{eq:main} as:
\begin{equation}\label{eq:main2}	
 X_t  =X_0+\int_0^t b(X_s)\d s + \int_0^t \sigma(X_s) \d B_s + \sup_{s\leq t} G_0(\mu_s),
\end{equation}
where $\mu_t$ stands for the law of 
\begin{equation*}
	U_t = X_0+\int_0^t b(X_s)\d s + \int_0^t \sigma(X_s) \d B_s.
\end{equation*}

We here are interested in the particle approximation of such a system. Our candidates are the particles:
\begin{equation*}
X_t^i =  X_0^i+ \int_0^t b(X_s^i)\d s + \int_0^t \sigma(X_s^i) \d B_s^i + \sup_{s\leq t} G_0\left(\mu_s^N\right),\quad 1\leq i \leq N,
\end{equation*}
where $B^i$ are independent Brownian motions, $X_0^i$ are independent copies of $X_0$  and $\mu_s^N$ denotes the empirical distribution at time $s$ of the particles
\begin{equation*}
U_s^i =  X_0^i + \int_0^s b(X_r^i)\d r + \int_0^s \sigma(X_r^i) \d B_r^i ,\quad 1\leq i \leq N, \qquad \mu_s^N = \dfrac{1}{N}\sum_{i=1}^N \delta_{U_s^i}.
\end{equation*}
It is worth noticing that
\begin{equation*}
	G_0\left(\mu_s^N\right) = \inf\left\{ x \geq 0 : \frac{1}{N}\sum_{i=1}^N h\left(x+U^i_s\right)\geq 0 \right\}.
\end{equation*}

\begin{remarque} Let us emphasize that the previous system of interacting particles can be seen as a multidimensional reflected SDE with oblique reflection. Indeed, if $h$ is concave, the set 
	\begin{equation*}
		C = \left\{ (x_1,\ldots,x_N)\in\R^N : h(x_1)+\ldots+h(x_N) \geq 0 \right\}
	\end{equation*}
	is convex and the system
\begin{equation*}
	\left\lbrace
	\begin{split}
	& X_t^i =  X_0^i + G_0(\mu_0^N)+ \int_0^t b(X_s^i)\d s + \int_0^t \sigma(X_s^i) \d B_s^i + K^N_t,\quad 1\leq i \leq N, \\
	& \frac{1}{N} \sum_{i=1}^N h(X_t^i) \geq 0, \quad \frac{1}{N} \sum_{i=1}^N \int_0^t h(X_t^i)\, \d K^N_s = 0,
	\end{split}
	\right.
\end{equation*}
is nothing else but the SDE driven by $b$ and $\sigma$ reflected in the convex $C$ with oblique reflexion in the direction $(1,\ldots,1)$. We refer to \cite{p._-l._lions_stochastic_1984}.
\end{remarque}

In order to prove that there is indeed a propagation of chaos effect, we introduce the following independent copies of $X$ 
\begin{equation*}
\bX_t^i = X_0^i + \int_0^t b(\bX_s^i) \d s + \int_0^t \sigma(\bX_s^i)\d B_s^i + \sup_{s\leq t} G_0(\mu_s),\quad 1\leq i\leq N,
\end{equation*}
and we couple these particles with the previous ones by choosing the same Brownian motion. 

In order to do so, we introduce the decoupled particles $\bU^i$, $1\leq i \leq N$:
\begin{equation*}
\bU_t^i = X_0^i + \int_0^t b(\bX_s^i)\d s + \int_0^t \sigma(\bX^i_s) \d B^i_s, \quad t \geq 0.
\end{equation*}
Note that for instance the particles $\bU^i$ are i.i.d. and let us denote by $\bmu^N$ the empirical measure associated to this system of particles.

We have the following result concerning the approximation \eqref{eq:main} by interacting particles system.

\begin{theoreme}\label{Prop:estiparticle}
Let assumptions \ref{H:eds} and \ref{H:h}  hold and $T>0$.
\begin{enumerate}[(i)]
\item Under assumption~\ref{H:moments}, there exists a constant $C$ depending on $b$ and $\sigma$ such that, for each $j\in\{1,\ldots,N\}$,
  \begin{equation*}
\E\left[ \sup_{s\leq T} \left|X_s^j - \bX^j_s \right|^2\right] \leq  C \exp\left( C \left(1 + \frac{M^2}{m^2}\right) (1+T^2)\right) \frac{M^2}{m^2}\, N^{-1/2}.
\end{equation*}

\item If assumption~\ref{H:smooth} is in force, then there exists a constant $C$ depending on $b$ and $\sigma$ such that, or each $j\in\{1,\ldots,N\}$,
\begin{equation*}
\E\left[ \sup_{s\leq T} \left|X_s^j - \bX^j_s \right|^2\right] \leq C \exp\left(C \left(1 + \frac{M^2}{m^2}\right) (1+T^2) \right) \frac{1+T^2}{m^2} \left(1+ \E\left[\sup_{s\leq t} |X_T|^2\right]\right) \, N^{-1}.
\end{equation*}
\end{enumerate}
\end{theoreme}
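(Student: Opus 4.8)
The plan is to combine the exchangeability of the particles with a Gronwall argument, isolating the empirical-measure fluctuation as the only genuine source term. Since the families $(X^i)$ and $(\bX^i)$ are exchangeable, the quantity $g(t):=\E[\sup_{s\le t}|X^j_s-\bX^j_s|^2]$ is independent of $j$, and subtracting the two defining equations gives
\[
X^j_t - \bX^j_t = \int_0^t \big(b(X^j_s)-b(\bX^j_s)\big)\,\d s + \int_0^t \big(\sigma(X^j_s)-\sigma(\bX^j_s)\big)\,\d B^j_s + \Big(\sup_{s\le t}G_0(\mu^N_s) - \sup_{s\le t}G_0(\mu_s)\Big).
\]
First I would take the supremum over $t$, square, and take expectations. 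The drift and diffusion terms are handled by Cauchy--Schwarz, the Burkholder--Davis--Gundy (Doob) inequality and the Lipschitz continuity of $b,\sigma$ from \ref{H:eds}, producing contributions bounded by $C(1+T)\int_0^t g(s)\,\d s$.

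For the reflection term I would use $|\sup_{s\le t}G_0(\mu^N_s)-\sup_{s\le t}G_0(\mu_s)|\le \sup_{s\le t}|G_0(\mu^N_s)-G_0(\mu_s)|$ and insert the decoupled empirical measure $\bmu^N_s$:
\[
|G_0(\mu^N_s)-G_0(\mu_s)| \le |G_0(\mu^N_s)-G_0(\bmu^N_s)| + |G_0(\bmu^N_s)-G_0(\mu_s)|.
\]
The first piece is controlled by the Wasserstein estimate \eqref{regG:wass} of Lemma \ref{lemme:regG} and the index coupling $W_1(\mu^N_s,\bmu^N_s)\le \frac1N\sum_i|U^i_s-\bU^i_s|$; bounding each $|U^i_s-\bU^i_s|$ by the same drift/diffusion estimates and using exchangeability, this piece is again of the form $C\frac{M^2}{m^2}(1+T)\int_0^t g(s)\,\d s$ and therefore feeds the Gronwall inequality. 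This is the origin of the $M^2/m^2$ appearing inside the exponential.

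The heart of the proof is the remaining source term $\rho_N(t):=\E[\sup_{s\le t}|G_0(\bmu^N_s)-G_0(\mu_s)|^2]$, which no longer involves $g$. Here I would use the finer estimate of Lemma \ref{lemme:regG}, $|G_0(\bmu^N_s)-G_0(\mu_s)|\le \frac1m|M^N_s|$, with
\[
M^N_s := \frac{1}{N}\sum_{i=1}^N h\big(\bar{G}_0(\mu_s)+\bU^i_s\big) - \E\big[h\big(\bar{G}_0(\mu_s)+U_s\big)\big].
\]
The crucial structural fact is that $\E[h(\bar{G}_0(\mu_s)+U_s)]=H(\bar{G}_0(\mu_s),\mu_s)=0$ for every $s$, so $M^N_s$ is a centered average of i.i.d.\ terms and, for each fixed $s$, $\E[|M^N_s|^2]=\frac1N\mathrm{Var}\big(h(\bar{G}_0(\mu_s)+U_s)\big)=O(1/N)$. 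The whole difficulty is to upgrade this pointwise $O(1/N)$ into a bound on the time-supremum. In case (i), with only the bi-Lipschitz assumption \ref{H:h} at hand, I would fall back on $|G_0(\bmu^N_s)-G_0(\mu_s)|\le \frac Mm W_1(\bmu^N_s,\mu_s)$ and invoke the one-dimensional convergence rate of the empirical measure of i.i.d.\ samples (valid under \ref{H:moments}); the price of uniformity in $s$ degrades the rate to $\E[\sup_{s\le t}W_1(\bmu^N_s,\mu_s)^2]\le CN^{-1/2}$, giving $\rho_N(t)\le C\frac{M^2}{m^2}N^{-1/2}$. In case (ii) the smoothness \ref{H:smooth} lets me recover the full rate uniformly in time: since $s\mapsto\bar{G}_0(\mu_s)$ is $C^1$ (differentiate the identity $\E[h(\bar{G}_0(\mu_s)+U_s)]=0$, using $\E[h']\ge m>0$, cf. Proposition \ref{prop:densityK}), I would apply Itô's formula to each $h(\bar{G}_0(\mu_s)+\bU^i_s)$ and split $M^N_s$ into a martingale part and a finite-variation part. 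Burkholder--Davis--Gundy handles the former, yielding $O(1/N)$ since $h'$ is bounded and $\sigma$ has linear growth; for the latter, differentiating the centering identity once more shows the drift integrand is itself a centered i.i.d.\ average, so after Cauchy--Schwarz in time its square is $O((1+T^2)/N)$. Altogether $\rho_N(t)\le C\frac{1+T^2}{m^2}\big(1+\E[\sup_{s\le T}|X_s|^2]\big)N^{-1}$.

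Collecting the three contributions yields $g(t)\le C\big(1+\frac{M^2}{m^2}\big)(1+T)\int_0^t g(s)\,\d s + C\,\rho_N(t)$, and Gronwall's lemma produces both the exponential prefactor and the claimed rates, the moment $\E[\sup_{s\le T}|X_s|^2]$ being finite by Proposition \ref{prop:polymom}. The main obstacle is precisely the time-supremum in $\rho_N$: obtaining the optimal $N^{-1}$ rate in case (ii) hinges on the martingale/finite-variation splitting and, above all, on the fact that both the initial value and the drift of the Itô expansion of $M^N_s$ are centered, a direct consequence of $\E[h(\bar{G}_0(\mu_s)+U_s)]=0$ for all $s$; without smoothness one is forced onto the uniform Wasserstein rate and loses a factor $N^{1/2}$.
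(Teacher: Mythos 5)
Your proposal is correct and follows essentially the same route as the paper: the same decomposition with the intermediate empirical measure $\bar\mu^N$, the same Gronwall reduction to the source term $\E\bigl[\sup_{s\le t}|G_0(\bar\mu^N_s)-G_0(\mu_s)|^2\bigr]$, and the same two treatments of that term (uniform-in-time Wasserstein rate under \ref{H:moments} for (i); Itô expansion of the centered fluctuation $\frac1N\sum_i h(\bar G_0(\mu_s)+\bar U^i_s)-\E[h(\bar G_0(\mu_s)+U_s)]$ with a martingale/finite-variation splitting for (ii)). The only step you leave essentially unjustified is the bound $\E[\sup_{s\le T}W_1^2(\bar\mu^N_s,\mu_s)]\le CN^{-1/2}$, which the paper obtains by a time-grid discretization combining the Hölder continuity of $t\mapsto\mu_t$ from Proposition~\ref{prop:polymom} with the Fournier--Guillin estimate $\E[W_1^4(\bar\mu^N_{t_k},\mu_{t_k})]\le CN^{-2}$ at the grid points and then optimizing the grid size.
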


\begin{remarque}
As shown in \cite{FG15}, the rate in case (i) is optimal in full generality for the control of the Wasserstein distance of the empirical measure of  an i.i.d. sample of random variables towards its own law. It is interesting to note that the supremum over time implies no loss here, as the "propagation of chaos" is mainly herited from the reflection term through $\sup_{s\le t}W_1^2(\bar\mu^N_s,\mu_s)$.

\end{remarque}

\begin{proof}
Let $t>0$. We have, for $r\leq t$,
\begin{equation*}
\left| X_r^j - \bX^j_r \right| \leq  \int_0^r \left| b(X_s^j) - b(\bX_s^j) \right| \d s + \left|\int_0^r \left(\sigma(X_s^j) - \sigma(\bX_s^j)\right) \d B_s^j\right| {\ce +}  \left|\sup_{s\leq r} G_0(\mu_s^N)  - \sup_{s\leq r} G_0(\mu_s)\right|.
\end{equation*}
Taking into account the fact that
\begin{align*}
	\left|\sup_{s\leq r} G_0(\mu_s^N)  - \sup_{s\leq r} G_0(\mu_s)\right| & \leq \sup_{s\leq r} \left| G_0(\mu_s^N) - G_0(\mu_s)\right| \leq \sup_{s\leq t} \left| G_0(\mu_s^N) - G_0(\mu_s)\right|, \\
	& \leq \sup_{s\leq t} \left| G_0(\mu_s^N) - G_0(\bmu_s^N)\right| + \sup_{s\leq t} \left| G_0(\bmu_s^N) - G_0(\mu_s)\right|,
\end{align*}
we get the inequality
\begin{equation}
	\label{eq:u}
	\sup_{r\leq t} \left| X_r^j - \bX^j_r \right| \leq I_1 + \sup_{s\leq t} \left| G_0(\mu_s^N) - G_0(\bmu_s^N)\right| + \sup_{s\leq t} \left| G_0(\bmu_s^N) - G_0(\mu_s)\right|,
\end{equation}
where we have set
\begin{equation*}
	I_1 = \int_0^t \left| b(X_s^j) - b(\bX_s^j) \right| \d s + \sup_{r\leq t} \left|\int_0^r \left(\sigma(X_s^j) - \sigma(\bX_s^j)\right) \d B_s^j\right|.
\end{equation*}

On the one hand we have, using Doob and Cauchy-Schwartz inequalities
$$ 
\E \left[|I_1|^2\right] \leq C(1+t) \, \int_0^t \E \left[\left| X_s^j- \bX_s^j\right|^2\right]\d s,
$$
where $C$ depends only on $b$ and $\sigma$ and may change from line to line.

On the other hand, by using \eqref{regG:wass},
\begin{equation*}
	\sup_{s\leq t} \left| G_0(\mu_s^N) - G_0(\bmu_s^N)\right| \leq \frac{M}{m} \sup_{s\leq t} \frac{1}{N} \sum_{i=1}^N \left| U_s^i - \bU_s^i\right| \leq \frac{M}{m} \frac{1}{N} \sum_{i=1}^N \sup_{s\leq t} \left| U_s^i - \bU_s^i\right|,
\end{equation*}
and Cauchy-Schwartz inequality gives, since the variables are exchangeable, 
\begin{equation*}
	\E\left[\sup_{s\leq t} \left| G_0(\mu_s^N) - G_0(\bmu_s^N)\right|^2\right] \leq \frac{M^2}{m^2} \frac{1}{N} \sum_{i=1}^N \E\left[\sup_{s\leq t} \left| U_s^i - \bU_s^i\right|^2\right] = \frac{M^2}{m^2} \E\left[\sup_{s\leq t} \left| U_s^j - \bU_s^j\right|^2\right].
\end{equation*} 
Since,
\begin{equation*}
	U_s^j - \bU_s^j = \int_0^s \left( b(X_r^j) - b(\bX_r^j) \right) \d r + \int_0^s \left(\sigma(X_r^j) - \sigma(\bX_r^j)\right) \d B_r^j
\end{equation*}
the same computations as we did before lead to
\begin{equation*}
	\E\left[\sup_{s\leq t} \left| G_0(\mu_s^N) - G_0(\bmu_s^N)\right|^2\right] \leq C \frac{M^2}{m^2}(1+t) \, \int_0^t \E \left[\left| X_s^j- \bX_s^j\right|^2\right]\d s.
\end{equation*}
Hence, with the previous estimates we get, coming back to~\eqref{eq:u},
\begin{align*}
	\E\left[\sup_{r\leq t} \left| X_r^j - \bX^j_r \right|^2\right] & \leq K \, \int_0^t \E \left[\left| X_s^j- \bX_s^j\right|^2\right]\d s + 3\, \E\left[ \sup_{s\leq t} \left| G_0(\bmu_s^N) - G_0(\mu_s)\right|^2\right], \\
	 & \leq K \, \int_0^t \E \left[ \sup_{r\leq s}\left| X_s^j- \bX_s^j\right|^2\right]\d s + 3\, \E\left[ \sup_{s\leq t} \left| G_0(\bmu_s^N) - G_0(\mu_s)\right|^2\right],
\end{align*}
where $K= C(1+t) \left(1+M^2/m^2\right)$. Thanks to Gronwall's Lemma
\begin{equation*}
	\E\left[\sup_{r\leq t} \left| X_r^j - \bX^j_r \right|^2\right] \leq C e^{Kt} \, \E\left[ \sup_{s\leq t} \left| G_0(\bmu_s^N) - G_0(\mu_s)\right|^2\right].
\end{equation*}

By Lemma \ref{lemme:regG} we know that 
\begin{equation*}
	\E\left[ \sup_{s\leq t} \left| G_0(\bmu_s^N) - G_0(\mu_s)\right|^2\right] \leq \frac{1}{m^2}\E\left[\sup_{s\leq t} \left| \int h(\bar{G}_0(\mu_s)+\cdot) (\d \bmu_s^N - \d \mu_s)\right|^2\right],
\end{equation*}
from which we deduce that
\begin{equation}\label{eq:rateconvgen}
	\E\left[\sup_{r\leq t} \left| X_r^j - \bX^j_r \right|^2\right] \leq C e^{Kt} \, \frac{1}{m^2}\E\left[\sup_{s\leq t} \left| \int h(\bar{G}_0(\mu_s)+\cdot) (\d \bmu_s^N - \d \mu_s)\right|^2\right].
\end{equation}

\smallskip

\noindent\emph{Proof of (i).}
Since the function $h$ is, at least, a Lipschitz function, we understand that the rate of convergence follows from the convergence of empirical measure of i.i.d. diffusion processes. The crucial point here is that we consider a uniform (in time) convergence, which may possibly damage the usual rate of convergence. We will see that however here, we are able to conserve this optimal rate. Indeed, in full generality (\emph{i.e.} if we only suppose that \ref{H:h} holds) we get that:
\begin{equation*}
	\frac{1}{m^2}\E\left[\sup_{s\leq t} \left| \int h(\bar{G}_0(\mu_s)+\cdot) (\d \bmu_s^N - \d \mu_s)\right|^2\right]  \leq \frac{M^2}{m^2} \E \left[\sup_{s\leq t} W_1^2\left(\bmu_s^N,\mu_s\right)\right], 
\end{equation*}
Thanks to the additional assumption~\ref{H:moments} and to Proposition~\ref{prop:polymom}, we will adapt and simplify the proof of Theorem 10.2.7 of \cite{RR98} using recent results about the control Wasserstein distance of empirical measures of i.i.d. sample to the true law by \cite{FG15}, to obtain
\begin{equation*}
	\E \left[\sup_{s\leq 1} W_1^2\left(\bmu_s^N,\mu_s\right)\right] \leq C\, N^{-1/2}.
\end{equation*}
Indeed, let $n$ be a positive integer and set $t_k=k/n$, $0\le k\le n$. As in \cite{RR98}, denote
$$Z_k=\sup_{t_k\le t \le t_{k+1}} W_1^2(\bar \mu^N_s,\mu_s).$$
Then
$$\sup_{s\leq 1} W_1^2\left(\bmu_s^N,\mu_s\right)\le 3\left[\max_k Z_k+\max_k W_1^2(\bar\mu^N_{t_k},\mu_{t_k})+\max_k\sup_{t_k\le t \le t_{k+1}} W_1^2(\mu_{t_k},\mu_t)\right].$$
Now, using the regularity properties of Proposition~\ref{prop:polymom} and proceeding exactly as in \cite[Th. 10.2.7]{RR98} we have that there exists $C>0$ such that
$$W_1^2(\mu_{t_k},\mu_t)\le \frac Cn,\qquad\qquad \E\max Z_k\le \frac{C}{\sqrt{n}}.$$
We are left to control $\E\max_k W_1^2(\bar\mu^N_{t_k},\mu_{t_k})$: first remark that
$$\E\max_k W_1^2(\bar\mu^N_{t_k},\mu_{t_k})\le \sqrt{n}\sqrt{\max_k\E W_1^4(\bar\mu^N_{t_k},\mu_{t_k})}.$$
Use now assumption~\ref{H:moments} and Theorem 2 (case (3)) in \cite{FG15} to get 
$$\E W_1^4(\bar\mu^N_{t_k},\mu_{t_k})\le \frac C{N^2}$$
from which we deduce that 
$$\E \left[\sup_{s\leq 1} W_1^2\left(\bmu_s^N,\mu_s\right)\right]\le C\left[\frac {\sqrt n}{N}+\frac1{\sqrt{n}}\right]$$
and optimization procedure on $n$ finishes the proof.
Let us emphasize that this result does not care of the fact that $\bmu^N$ is an empirical measure associated to i.i.d. copies of a {\it diffusion process}. 

\noindent\emph{Proof of (ii).}
In the case where $h$ is a twice continuously differentiable function with bounded derivatives (\emph{i.e.} under \ref{H:smooth}), we succeed to take benefit from the fact that $\bmu^N$ is an empirical measure associated to i.i.d. copies of diffusion process, in particular we can get rid of the supremum in time. In view of \eqref{eq:rateconvgen}, we need a sharp estimate of 
\begin{equation*}
	\E\left[ \sup_{s\leq t} \left| \int h\left(\bar{G}_0(\mu_s)+\cdot\right) (\d \bmu_s^N - \d \mu_s)\right|^2 \right].
\end{equation*}

Let us first observe that $s\longmapsto \bar{G}_0(\mu_s)$ is locally Lipschitz continuous. Indeed, since by definition $H\left(\bar{G}_0(\mu_t), \mu_t\right)=0$, if $s<t$, using~\eqref{reg:Hspace},
\begin{align*}
	\left| \bar{G}_0(\mu_s) - \bar{G}_0(\mu_t)\right | & \leq \frac{1}{m} \left| H\left(\bar{G}_0(\mu_s), \mu_t\right)\right| = \frac{1}{m} \left| \E\left[h\left(\bar{G}_0(\mu_s)+U_t\right)\right]\right|, \\
	& = \frac{1}{m}\left| \E\left[h\left(\bar{G}_0(\mu_s)+U_s + \int_s^t b(X_r) dr + \int_s^t \sigma(X_r)dB_r\right)\right] \right|.
\end{align*}
We get from Itô's formula, setting
\begin{equation*}
	\bar{\mathcal{L}}_{y}:= b(y)\frac{\p}{\p x} + \sigma\sigma^*(y)\frac{\p^2}{\p x^2},
\end{equation*}
\begin{align*}
	\E\left[h\left(\bar{G}_0(\mu_s)+U_t\right)\right] & = \E\left[ h\left(\bar{G}_0(\mu_s)+U_s\right)\right] + \int_s^t \E\left[\bar{\mathcal{L}}_{X_r} h\left(\bar{G}_0(\mu_s)+U_r\right)\right] dr, \\	
	& = H\left(\bar{G}_0(\mu_s),\mu_s\right) + \int_s^t \E\left[\bar{\mathcal{L}}_{X_r} h\left(\bar{G}_0(\mu_s)+U_r\right)\right] dr, \\
	& = \int_s^t \E\left[\bar{\mathcal{L}}_{X_r} h\left(\bar{G}_0(\mu_s)+U_r\right)\right] dr.
\end{align*}
Since $h$ has bounded derivatives and $\sup_{s\leq T} |X_s|$ is a square integrable random variable for each $T>0$ (see Corollary \ref{prop:polymom}), the result follows easily.

Let us denote by $\psi$ the Radon-Nikodym derivative of $\bar{G}_0(\mu_\cdot)$. By definition, we have, denoting by $V^i$ the semimartingale $s\longmapsto \bar{G}_0(\mu_s)+ \bU^i_s$, since $\bU^i$ are independent copies of $U$,
\begin{align*}
	R_N(s) := \int h\left(\bar{G}_0(\mu_s)+\cdot \right) (\d \bmu_s^N - \d \mu_s) & = \frac{1}{N}\sum_{i=1}^N h\left( \bar{G}_0(\mu_s)+ \bU^i_s\right) - \E\left[h\left( \bar{G}_0(\mu_s) + U_s\right)\right], \\
	& = \frac{1}{N}\sum_{i=1}^N \left\{ h\left( \bar{G}_0(\mu_s)+ \bU^i_s\right) -\E\left[h\left( \bar{G}_0(\mu_s)+ \bU^i_s\right)\right]\right\}, \\
	&= \frac{1}{N}\sum_{i=1}^N \left\{ h\left( V^i_s\right) -\E\left[h\left( V^i_s\right)\right]\right\},
\end{align*}
It follows from Itô's formula 
\begin{align*}
	h\left( V^i_s\right) & = h\left( V^i_0\right) + \int_0^s h'\left(V^i_r\right) \psi_r \d r + \int_0^s \bar{\mathcal{L}}_{\bX_r}h\left(V_r^i \right)\d r + \int_0^s h'\left(V_r^i\right)\sigma(\bX_r^i) \d B_r^i, \\
	& = h\left( V^i_0\right) + \int_0^s \left\{ h'\left(V^i_r\right) \psi_r + \bar{\mathcal{L}}_{\bX_r}h\left(V_r^i \right)\right\} \d r + \int_0^s h'\left(V_r^i\right)\sigma(\bX_r^i) \d B_r^i,
\end{align*}
together with
\begin{align*}
	\E\left[h\left( V^i_s\right)\right] & = \E\left[h\left( V^i_0\right)\right] + \int_0^s \E\left[ h'\left(V^i_r\right) \psi_r + \bar{\mathcal{L}}_{\bX_r}h\left(V_r^i \right)\right] \d r, \\
	& = 0 + \int_0^s \E\left[ h'\left(V^i_r\right) \psi_r + \bar{\mathcal{L}}_{\bX_r}h\left(V_r^i \right)\right] \d r.
\end{align*}

We deduce immediately that
\begin{align*}
	R_N(s) & = \frac{1}{N}\sum_{i=1}^N h\left( V^i_0\right) +  \frac{1}{N}\sum_{i=1}^N  \int_0^s C^i_r \,\d r + M_N(s), \\
	& = \frac{1}{N}\sum_{i=1}^N h\left( V^i_0\right) + \int_0^s \left(\frac{1}{N}\sum_{i=1}^N  C^i(r)\right) \d r + M_N(s)
	\intertext{where we have set}
	C^i({\ce r}) & = h'\left(V^i_r\right) \psi_r + \bar{\mathcal{L}}_{\bX_r}h\left(V_r^i \right) - \E\left[ h'\left(V^i_r\right) \psi_r + \bar{\mathcal{L}}_{\bX_r}h\left(V_r^i \right)\right], \\
	M_N(s) & = \frac{1}{N}\sum_{i=1}^N \int_0^s h'\left(V_r^i\right)\sigma(\bX_r^i) \d B_r^i. 
\end{align*}
As a byproduct,
\begin{align*}
	\sup_{s\leq t} \left| R_N(s) \right| & \leq \left| \frac{1}{N}\sum_{i=1}^N h\left( V^i_0\right) \right| + \sup_{s\leq t} \int_0^s \left| \frac{1}{N}\sum_{i=1}^N  C^i(r) \right| \d r + \sup_{s\leq t} |M_N(s)|, \\
	& \leq \left| \frac{1}{N}\sum_{i=1}^N h\left( V^i_0\right) \right| + \int_0^t \left| \frac{1}{N}\sum_{i=1}^N  C^i(r) \right| \d r+ \sup_{s\leq t} |M_N(s)|.
\end{align*}
We get, using Cauchy-Schwartz inequality, since $U^i$ and $\bar X^i$ are i.i.d,
\begin{multline*}
	\E\left[\sup_{s\leq t} |R_N(s)|^2\right] \\
	 \leq 3 \left\{ \mathrm{Var}\left[\frac{1}{N}\sum_{i=1}^N h\left( V^i_0\right)\right]  + \E\left[\left(\int_0^t \left| \frac{1}{N}\sum_{i=1}^N  C^i(r) \right| \d r\right)^2 \right] + \E\left[ \sup_{s\leq t} | M_N(s)|^2\right]\right\}, \\
	 \leq 3 \left\{ \mathrm{Var}\left[\frac{1}{N}\sum_{i=1}^N h\left( V^i_0\right)\right]  + t\, \E\left[\int_0^t \left| \frac{1}{N}\sum_{i=1}^N  C^i(r) \right|^2 \d r\right] + \E\left[ \sup_{s\leq t} | M_N(s)|^2\right]\right\}, \\
	 = 3 \left\{ \mathrm{Var}\left[\frac{1}{N}\sum_{i=1}^N h\left( V^i_0\right)\right]  + t\,\int_0^t \mathrm{Var}\left( \frac{1}{N}\sum_{i=1}^N  C^i(r) \right) \d r + \E\left[ \sup_{s\leq t} | M_N(s)|^2\right] \right\}.
\end{multline*}
Thus, we get
\begin{multline*}
	\E\left[\sup_{s\leq t} |R_N(s)|^2\right] \\
	\leq \frac{3}{N} \mathrm{Var}\left[h\left(V_0\right)\right] + \frac{3t}{N}\, \int_0^t \mathrm{Var}\left( C(r)\right) \d r + 3\, \E\left[ \sup_{s\leq t} | M_N(s)|^2\right], \\
	 = \frac{3}{N} \mathrm{Var}\left[h\left(V_0\right)\right] + \frac{3t}{N}\, \int_0^t \mathrm{Var}\left(h'\left(V_r\right) \psi_r + \bar{\mathcal{L}}_{X_r}h\left(V_r \right)\right) \d r + 3\, \E\left[ \sup_{s\leq t} | M_N(s)|^2\right].
\end{multline*}
Since $M_N$ is a martingale with
\begin{equation*}
	\left\langle M_N \right\rangle_t = \frac{1}{N^2} \sum_{i=1}^N \int_0^t  \left( h'\left(V^i_r\right)\sigma\left(\bX^i_r\right) \right)^2  \d r,
\end{equation*}
Doob's inequality leads to
\begin{align*}
	\E\left[ \sup_{s\leq t} | M_N(s)|^2\right] & \leq 4\, \E\left[|M_N(t)|^2\right], \\
	&= \frac{4}{N^2}\, \sum_{i=1}^N \int_0^t \E\left[\left( h'(V^i_r)\sigma(\bX^i_r) \right)^2\right] \d r, \\
	&= \frac{4}{N}\, \int_0^t \E\left[\left( h'(V_r)\sigma(X_r) \right)^2\right] \d r.
\end{align*}
Finally, using the fact that $h$ has bounded derivatives, $b$ and $\sigma$ are Lipschitz, we get
\begin{equation*}
	\E\left[ \sup_{s\leq t} | R_N(s)|^2\right]  \leq C(1+t^2) \left(1+ \E\left[\sup_{s\leq t} |X_s|^2\right]\right) \, N^{-1}.
\end{equation*}
This gives the result coming back to~\eqref{eq:rateconvgen}.

\end{proof}

\section{A numerical scheme for MRSDE}
\label{sec:NSMRSDE}
We are interested in the  numerical approximation of the SDE \eqref{eq:main} on $[0,T]$. Here are the main steps of the scheme. Let $0=T_0 < T_1 <\cdots<T_n=T$ be a subdivision of $[0,T]$. Given this subdivision, we denote by ``$\underbar{\ }$'' the mapping $s \mapsto \s= T_{k}$ if $s \in [T_k,T_{k+1})$, $k\in\{0,\cdots ,n-1\}$. For simplicity, we consider only the case of regular subdivisions: for a given integer $n$, $T_k = k \, T/n$, $k=0,\ldots,n$.

Let us recall that we proved in the previous section that particles system
\begin{align*}
X_t^i & =  X_0^i+ \int_0^t b(X_s^i)\d s + \int_0^t \sigma(X_s^i) \d B_s^i + \sup_{s\leq t} G_0\left(\mu_s^N\right),\quad 1\leq i \leq N, \\
\intertext{where we have set}
U_s^i & =  X_0^i + \int_0^s b(X_r^i)\d r + \int_0^s \sigma(X_r^i) \d B_r^i ,\quad 1\leq i \leq N, \qquad \mu_s^N = \dfrac{1}{N}\sum_{i=1}^N \delta_{U_s^i}.
\end{align*}
$B^i$ being independent Brownian motions and $X_0^i$ being independent copies of $X_0$, converges toward the solution to~\eqref{eq:main}. Thus, the numerical approximation is obtained by an Euler scheme applied to this particles system. We introduce the following discrete version of the particles system
\begin{align*}
\tilde{X}_t^i & =  X_0^i+ \int_0^t b\left(\tX_{\s}^i\right)\d s + \int_0^t \sigma\left(\tX_{\s}^i\right) \d B_s^i + \sup_{s\leq t} G_0\left(\tmu_{\s}^N\right),\quad 1\leq i \leq N, \\
\intertext{with the notation}
\tU_t^i & =  X_0^i + \int_0^t b\left(\tX_{\s}^i\right)\d s + \int_0^t \sigma\left(\tX_{\s}^i\right) \d B_s^i ,\quad 1\leq i \leq N, \qquad \tmu_s^N = \dfrac{1}{N}\sum_{i=1}^N \delta_{\tU_s^i}.
\end{align*}


\subsection{Scheme}
Using the notations given above, the result on the interacting system of mean reflected particles of the MR-SDE of Section \ref{sec:PMRSDE} and Remark \ref{rem:markovK}, we deduce the following algorithm for the numerical approximation of the MR-SDE: 

\begin{algorithm}[h!]
\caption{Particle approximation}\label{algo:KF}
\begin{algorithmic}[1]
\For{$1\leq j \leq N$} 
\State $\l(\l(\tX^{\tmu^N}_0\r)^j,\l(\tU^{\tmu^N}_0\r)^j,\hmu_0^N\r) =(x,x,\delta_x)$
\EndFor
\For{$1 \leq k \leq n$}
\For{$1\leq j \leq N$}
\State $G^j \sim \mathcal{N}(0,1)$
\State $\l(\tU_{T_k}^{\tmu^N}\r)^j = \l(\tU_{T_{k-1}}^{\tmu^N}\r) +   (T/n)b\l(\l(\tX_{T_{k-1}}^{\tmu^N}\r)^j\r) +  \sqrt{(T/n)} \sigma\l(\l(\tX_{T_{k-1}}^{\tmu^N}\r)^j\r) G^j$
\EndFor
\State $\tmu_{T_k}^N = N^{-1}\sum_{j=1}^N\delta_{\l(\tU_{T_k}^{\tmu^N}\r)^j}$
\State  $\Delta_{k}\hat{K}^N = \sup_{l\leq k} G_0(\tmu_{T_l}^N) - \sup_{l\leq k-1} G_0(\tmu_{T_l}^N)$
\For{$1\leq j \leq N$}
\State $ \l(\tX_{T_k}^{\tmu^N}\r)^j = \l(\tX_{T_{k-1}}^{\tmu^N}\r)^j +(T/n)  b\l(\l(\tX_{T_{k-1}}^{\tmu^N}\r)^j\r) +  \sqrt{(T/n)} \sigma\l(\l(\tX_{T_{k-1}}^{\tmu^N}\r)^j\r) G^j+ \Delta_{k}\hat{K}^N$
\EndFor
\EndFor
\end{algorithmic}
\end{algorithm}

\begin{remarque}
We emphasize that, at each step $k$ of the algorithm, we approximate the increment of the reflection process $K$ by the increment of the this approximation: 
$$\Delta_{k}\hat{K}^N : = \sup_{l\leq k} G_0(\tmu_{T_l}^N) - \sup_{l\leq k-1} G_0(\tmu_{T_l}^N).$$ 
As suggested in Remark \ref{rem:markovK}, this increment can be approached by:
\begin{multline*}
	\widehat{\Delta_{k}K}^N  : = \\
	\inf\left\{x \geq 0 : \frac{1}{N} \sum_{i=1}^N h\left(x +  \l(\tX_{T_{k-1}}^{\tmu^N}\r)^i + \frac{T}{n}  b\l(\l(\tX_{T_{k-1}}^{\tmu^N}\r)^i\r) +  \frac{\sqrt T}{\sqrt n} \sigma\l(\l(\tX_{T_{k-1}}^{\tmu^N}\r)^i\r) G^j\right) \geq 0 \right\}.
\end{multline*}
Indeed, using the same kind of arguments as in the sketch of the proof of Theorem  \ref{th:wp}, one can show that the increments of the approximated reflection process are equals to the approximation of the increments:
$$\forall k \in \{1,\ldots,n\}:\quad \widehat{\Delta_{k}K}^N =\Delta_{k}\hat{K}^N.$$
\end{remarque}

\subsection{Scheme error}

\begin{proposition}\label{en:euler}
	Let $T>0$, $N$ and $n$ be two non-negative integers and let assumptions \ref{H:eds}, \ref{H:h} and \ref{H:moments} hold. There exists a constant $C$, depending on $T$, $b$, $\sigma$, $h$ and $X_0$ but independent of $N$, such that: for all $i=1,\ldots,N$
	\begin{equation*}
		\E\left[\sup_{s\leq T} \left|X^i_t - \tX^i_t\right|^2\right] \leq C \, \frac{\log n}{n}.
	\end{equation*}
\end{proposition}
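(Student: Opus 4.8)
The plan is to control the error process $e^i_t:=X^i_t-\tX^i_t$ by separating the Euler discretization of the coefficients from the discretization of the reflection term. Writing $\epsilon^i_t:=U^i_t-\tU^i_t$ for the error on the non-reflected parts and using $X^i_t=U^i_t+\sup_{s\le t}G_0(\mu^N_s)$, $\tX^i_t=\tU^i_t+\sup_{s\le t}G_0(\tmu^N_{\s})$, one gets
\begin{equation*}
	\sup_{r\le t}|e^i_r|\le \sup_{r\le t}|\epsilon^i_r|+\sup_{s\le t}\bigl|G_0(\mu^N_s)-G_0(\tmu^N_{\s})\bigr|.
\end{equation*}
For the reflection term I would invoke the $W_1$-Lipschitz estimate of Lemma~\ref{lemme:regG} together with the bound of the Wasserstein distance between two $N$-atom empirical measures by the diagonal coupling, which yields
\begin{equation*}
	\sup_{s\le t}\bigl|G_0(\mu^N_s)-G_0(\tmu^N_{\s})\bigr|\le \frac{M}{m}\,\frac1N\sum_{j=1}^N\Bigl(\sup_{s\le t}|\epsilon^j_s|+\sup_{s\le t}\bigl|\tU^j_s-\tU^j_{\s}\bigr|\Bigr).
\end{equation*}
This reduces everything to the coefficient error $\epsilon$, which will be closed by Gronwall, and to the genuine time-discretization increment $\sup_{s\le t}|\tU^j_s-\tU^j_{\s}|$.

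To close the Gronwall argument, I would write $\epsilon^i$ as the sum of a drift integral and a stochastic integral, use the Lipschitz continuity of $b$ and $\sigma$ to split $b(X^i_s)-b(\tX^i_{\s})$ into a term controlled by $|e^i_s|$ and one controlled by $|\tX^i_s-\tX^i_{\s}|$ (and similarly for $\sigma$). The key observation is that the reflection part of $\tX$ is piecewise constant on the grid, so within a cell $|\tX^i_s-\tX^i_{\s}|=|\tU^i_s-\tU^i_{\s}|$. After Doob's and Cauchy--Schwarz inequalities, and using exchangeability to replace the empirical averages by a single representative, one arrives at an inequality of the form $\phi(t)\le C\int_0^t\phi(s)\,\d s+C\,\delta_n$, where $\phi(t):=\E[\sup_{r\le t}|e^i_r|^2]$ and $\delta_n:=\E[\sup_{s\le T}|\tU^i_s-\tU^i_{\s}|^2]$. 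Gronwall's lemma then gives $\phi(T)\le C e^{CT}\delta_n$, so the whole estimate rests on showing $\delta_n\le C\log n/n$.

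The main obstacle is precisely this bound on $\delta_n$, which is also the source of the extra logarithmic factor. Decomposing $\tU^i_s-\tU^i_{\s}=b(\tX^i_{\s})(s-\s)+\sigma(\tX^i_{\s})(B^i_s-B^i_{\s})$, the drift part contributes only $O(n^{-2})$, so the delicate term is the diffusive one. Here I would bound
\begin{equation*}
	\sup_{s\le T}\bigl|\sigma(\tX^i_{\s})(B^i_s-B^i_{\s})\bigr|^2\le\Bigl(\sup_{k}\bigl|\sigma(\tX^i_{T_k})\bigr|^2\Bigr)\Bigl(\max_{k}\sup_{T_k\le s\le T_{k+1}}\bigl|B^i_s-B^i_{T_k}\bigr|^2\Bigr),
\end{equation*}
and apply Cauchy--Schwarz. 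The first factor is handled by a uniform fourth-moment bound on the Euler scheme—a discrete counterpart of Proposition~\ref{prop:polymom}, which is where assumption~\ref{H:moments} ($p>4$) enters. The second factor requires the maximal inequality $\E\bigl[\max_k\sup_{T_k\le s\le T_{k+1}}|B^i_s-B^i_{T_k}|^4\bigr]\le C\,(T\log n/n)^2$: each cell produces a Brownian oscillation of typical size $\sqrt{T/n}$ with sub-Gaussian tails, and the maximum over the $n$ cells inflates the squared oscillation from $O(1/n)$ to $O(\log n/n)$. Taking square roots gives exactly $\delta_n\le C\log n/n$, and combining with the Gronwall estimate completes the proof.
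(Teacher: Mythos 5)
Your proposal is correct and follows essentially the same route as the paper: the same decomposition into coefficient error plus the $G_0(\mu^N_s)-G_0(\tmu^N_{\s})$ term controlled via Lemma~\ref{lemme:regG} and the diagonal coupling, the same key observation that the reflection part of $\tX$ is constant on each grid cell so that $\tX^i_s-\tX^i_{\s}=\tU^i_s-\tU^i_{\s}$, the same Cauchy--Schwarz splitting of the diffusive increment into a fourth-moment bound on the Euler scheme times the Brownian oscillation bound $\E[\sup_{s\le T}|B_s-B_{\s}|^4]\le C(\log n/n)^2$ (which the paper isolates as Lemma~\ref{en:BMD}), and the same Gronwall closure. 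The only difference is that you state the maximal inequality for the Brownian increments with a heuristic rather than a proof, whereas the paper supplies one via a Jensen-type argument with the convex function $f(x)=e^{\sqrt{1+x/16}}$.
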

Let us admit for the moment the following result that will be useful for our proof.
\begin{lemme}\label{en:BMD}
	There exists a constant $C$ such that
	\begin{equation*}
		\E\left[\sup_{s\leq T} |B_s-B_{\s}|^4\right] \leq C \left(\frac{\log n}{n}\right)^2.
	\end{equation*}
\end{lemme}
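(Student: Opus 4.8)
The plan is to exploit the block structure of the increments over the regular grid together with the classical Gaussian tail bound for the running supremum of Brownian motion. First I would write the quantity as a maximum over the $n$ subintervals:
\[
\sup_{s\leq T} |B_s-B_{\s}| = \max_{0\leq k\leq n-1}\ \sup_{T_k\leq s\leq T_{k+1}} |B_s - B_{T_k}|.
\]
Since the grid is regular with mesh $T/n$ and Brownian motion has stationary increments, each block $M_k := \sup_{T_k\leq s\leq T_{k+1}}|B_s - B_{T_k}|$ has the same law as $\sqrt{T/n}\,\xi$, where $\xi := \sup_{0\leq u\leq 1}|W_u|$ for a standard Brownian motion $W$.

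The second ingredient is the tail estimate for $\xi$. By the reflection principle, $\P(\sup_{0\leq u\leq 1} W_u > x)=2\,\P(W_1>x)$, and treating the two-sided supremum the same way yields a constant $c_0$ with $\P(\xi > x)\leq c_0\,e^{-x^2/2}$ for all $x\geq 0$. A plain union bound over the $n$ blocks (independence is not even needed) then gives, for every $y\geq 0$,
\[
\P\Big(\sup_{s\leq T}|B_s-B_{\s}| > y\Big) \leq \sum_{k=0}^{n-1}\P(M_k > y) = n\,\P\big(\xi > y\sqrt{n/T}\big).
\]

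Next I would compute the fourth moment through the layer-cake formula and rescale. Writing $Z=\sup_{s\leq T}|B_s-B_{\s}|$ and substituting $y=\sqrt{T/n}\,w$,
\[
\E[Z^4] = \int_0^\infty 4y^3\,\P(Z>y)\,\d y \leq \Big(\frac{T}{n}\Big)^2 \int_0^\infty 4w^3 \min\{1,\ n\,\P(\xi>w)\}\,\d w.
\]
I would split the last integral at $a_n := \sqrt{2\log n}$. On $[0,a_n]$ one bounds the minimum by $1$, producing $a_n^4 \leq C(\log n)^2$. On $(a_n,\infty)$ one uses the Gaussian tail together with the elementary identity $\int_a^\infty w^3 e^{-w^2/2}\,\d w=(a^2+2)e^{-a^2/2}$; with $a_n=\sqrt{2\log n}$ one has $n\,e^{-a_n^2/2}=1$, so this part contributes at most $C\,n\,(a_n^2+2)e^{-a_n^2/2}=C(2\log n+2)=O(\log n)$. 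Hence the bracketed integral is $O((\log n)^2)$ and
\[
\E\big[\sup\nolimits_{s\leq T}|B_s-B_{\s}|^4\big] \leq C\Big(\frac{T}{n}\Big)^2 (\log n)^2 = C\Big(\frac{\log n}{n}\Big)^2,
\]
which is the claim.

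The only delicate point is the optimization of the splitting threshold $a_n$: the choice $a_n=\sqrt{2\log n}$ is exactly the scale at which the union bound $n\,\P(\xi>a_n)$ becomes of order one, so that the ``bulk'' term $a_n^4\sim(\log n)^2$ dominates while the ``tail'' term remains of the lower order $O(\log n)$. Everything else is routine: the block decomposition, Brownian scaling, and the reflection-principle tail bound are standard, and no use of the independence of the blocks is required since only the upper tail enters.
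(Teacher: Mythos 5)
Your proof is correct, and it takes a genuinely different route from the paper in the key estimate. You share with the paper the first two steps: the block decomposition of $\sup_{s\leq T}|B_s-B_{\s}|$ over the regular grid and the Brownian scaling that reduces everything to the maximum of $n$ identically distributed sub-Gaussian quantities of order $\sqrt{T/n}$. Where you diverge is in how that maximum is controlled. The paper first replaces each two-sided block supremum by two one-sided ones (each distributed as $|B_{T_1}|$ by the reflection principle) and then bounds $\E\left[\max_{k\leq n}|X_k|^4\right]$ for standard Gaussians $X_k$ via Jensen's inequality applied to the concave inverse of the Orlicz-type function $f(x)=e^{\sqrt{1+x/16}}$, which yields the explicit closed-form bound $16\left((1+\log n+\tfrac12\log 2)^2-1\right)$. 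You instead keep the two-sided supremum $\xi=\sup_{u\leq 1}|W_u|$, invoke the reflection-principle tail $\P(\xi>x)\leq c_0e^{-x^2/2}$, apply a union bound over the $n$ blocks, and integrate by the layer-cake formula with the truncation level $a_n=\sqrt{2\log n}$; your computations (the substitution, the identity $\int_a^\infty w^3e^{-w^2/2}\,\d w=(a^2+2)e^{-a^2/2}$, and the balance $n e^{-a_n^2/2}=1$) all check out. Both devices are standard for maxima of sub-Gaussian variables and both give the sharp $(\log n)^2$ order; the paper's Jensen trick is slicker and produces a clean constant in one line, while your tail-integration argument is more elementary, makes the origin of the $\sqrt{2\log n}$ threshold transparent, and generalizes immediately to any moment $\E[Z^{2p}]\leq C\left(T\log n/n\right)^{p}$ and to any family with a uniform sub-Gaussian tail. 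As you note, neither argument uses independence of the blocks.
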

We may now proceed to the proof of proposition \ref{en:euler}
\begin{proof}

Let us fix $i\in\{1,\ldots,N\}$ and $T>0$. We have, for $t\leq T$, 
\begin{align*}
\left|X_{t}^{i} - \tX_{t}^{i}\right| &\leq  \int_{0}^{t} \left| b\left(X_s^{i}\right) - b\left(\tX_{\s}^{i}\right)\right| \d s + \left\vert \int_0^{t} \left( \sigma\left(X_s^{i}\right) - \sigma\left(\tX_{\s}^{i}\right)\right) \d B_s \right\vert \\
&\quad + \sup_{s \leq t}  \left| G_0\left(\mu^N_s\right)- G_0\left(\tmu^N_{\s}\right) \right|.
\end{align*}
Hence, using Cauchy-Schwartz and Doob inequality we get:
\begin{eqnarray}\label{eq:estierror1}
\E\left[\sup_{s\leq t}\left|X_{s}^{i}- \tX_{s}^{i}\right|^2\right] \leq  C\, \int_{0}^{t} \E\left[\left|X_{s}^{i} - \tX_{\s}^{i}\right|^2\right] \d s + \E\left[\sup_{s\leq t} \left|G\left(\mu^N_s\right)-G\left(\tmu^N_{\s}\right)\right|^2\right].
\end{eqnarray}

We now deal with the last term in the right hand side: from Lemma \ref{lemme:regG} we have
\begin{align*}
\E\left[\sup_{s\leq t} \left|G\left(\mu^N_s\right)-G\left(\tmu^N_{\s}\right)\right|^2\right] & \leq \left(\frac{M}{m}\right)^2 \E\left[ \sup_{s \leq t}\frac{1}{N} \sum_{j=1}^N \left| U^{j}_s - \tU^{j}_s \right|^2 \right], \\
& \leq  2\left(\frac{M}{m}\right)^2 \E\left[ \sup_{s \leq t}\frac{1}{N} \sum_{j=1}^N \Bigg\{\left| U^{j}_s - \tU^{j}_s \right|^2 + \left| \tU^{j}_s - \tU^{j}_{\s} \right|^2\Bigg\} \right], 
\intertext{and, since the law of the particles is independent by permutations,}
& \leq 2\left(\frac{M}{m}\right)^2 \E\left[ \sup_{s \leq t}\left| U^{i}_s - \tU^{i}_s \right|^2 + \sup_{s \leq t} \left| \tU^{i}_s - \tU^{i}_{\s} \right|^2 \right].
\end{align*}

For the first term of the right hand side, let us observe that, since the law of the particles is independent by permutations,
\begin{multline*}
\E\left[ \sup_{s \leq t}\left| U^{i}_s - \tU^{i}_s \right|^2\right]  \\
\leq   \E \left[\sup_{s \leq t}\Bigg\{ \left(\int_0^{s} \left| b\left(X_r^{i}\right) - b\left(\tX_{\ur}^{i}\right) \right |\d r\right)^2 
+ \left| \int_0^s  \left(\sigma\left(X_r^{i}\right) - \sigma\left(\tX_{\ur}^{i}\right)\right) \d B_r \right |^2 \Bigg\}\right],\\
\leq     t ||b||_{\Lip}^2 \int_0^{t} \E\left[\left| X_s^{i}- \tX_{\s}^{i}\right |^2\right]\d s 
+ 2 ||\sigma ||_{\Lip}^2\int_0^{t} \E \left[\left| X_s^{i} - \tX_{\s}^{i} \right |^2\right]\d s .
\end{multline*}

We have for the second term 
\begin{equation*}
\E\left[\sup_{s\leq t}\left| \tU^{i}_s - \tU^{i}_{\s} \right|^2 \right] \leq \E\left[\sup_{s\leq t}\Bigg\{ \l|b(\tX^{i}_{\s})\r|^2(s-\s)^2 + \l|\sigma(\tX^{i}_{\s})\r|^2 (B_s-B_{\s})^2\Bigg\}\right],
\end{equation*}
and obviously
\begin{equation*}
	\E\left[\sup_{s\leq t} \l|b(\tX^{i}_{\s})\r|^2(s-\s)^2 \right] \leq C\left(1+\E\left[\sup_{s\leq T} \l|\tX_s^{i}\r|^2\right]\right) \l(\frac{T}{n}\r)^2.
\end{equation*}
On the other hand, 
\begin{align*}
\E\left[\sup_{s\leq t} \l|\sigma(\tX^{i}_{\s})\r|^2 (B_s-B_{\s})^2 \right] & \leq \E\left[\sup_{s\leq T} \l|\sigma(\tX^{i}_{\s})\r|^4\right]^{1/2} \, \E\left[\sup_{s\leq T} |B_s-B_{\s}|^4\right]^{1/2}, \\
& \leq C\left(1+\E\left[\sup_{s\leq T} \l|\tX_s^{i}\r|^4\right]^{1/2}\right)\, \E\left[\sup_{s\leq T} |B_s-B_{\s}|^4\right]^{1/2}.
\end{align*}

We have 
\begin{equation}\label{eq:deltaU}
	\E\left[\sup_{s\leq t}\left| \tU^{i}_s - \tU^{i}_{\s} \right|^2 \right] \leq C\, \frac{\log n}{n},
\end{equation}
from which we derive the inequality
\begin{equation}\label{eq:estierror2}
\E\left[\sup_{s\leq t} \left|G\left(\mu^N_s\right)-G\left(\tmu^N_{\s}\right)\right|^2\right] \leq  C \Bigg\{ \l(\frac{\log n}{n}\r) +  \int_0^{t} \E\left[\left| X_s^{i}- \tX_{\s}^{i}\right |^2\right]\d s \Bigg\},
\end{equation}
and taking into account~\eqref{eq:estierror1} we get
\begin{equation}\label{eq:estierror3}
	\E\left[\left|X_{t}^{i}- \tX_{t}^{i}\right|^2\right] \leq C \Bigg\{ \l(\frac{\log n}{n}\r) +  \int_0^{t} \E\left[\left| X_s^{i}- \tX_{\s}^{i}\right |^2\right]\d s \Bigg\}.
\end{equation}

Since
\begin{align*}
  \E\left[\left| X_s^{i} - \tX_{\s}^{i} \right |^2\right] & \leq 2\,\E\left[\left|X_s^{i} -\tX_{s}^{i}\right |^2\right] + 2\,\E\left[\left|\tX_{s}^{i} -\tX_{\s}^{i}\right |^2\right] , \\
  & = 2\,\E\left[\left|X_s^{i} -\tX_{s}^{i}\right |^2\right] + 2\,\E\left[\left|\tU_{s}^{i} -\tU_{\s}^{i}\right |^2\right],
\end{align*}
it follows from \eqref{eq:deltaU} and \eqref{eq:estierror3} that
\begin{equation*}
	\E\left[\left|X_{t}^{i}- \tX_{t}^{i}\right|^2\right] \leq C \left\{ \l(\frac{\log n}{n}\r) +  \int_0^{t} \E\left[\left| X_s^{i}- \tX_{s}^{i}\right |^2\right]\d s \right\},
\end{equation*}
and we conclude the proof with Gronwall lemma.
\end{proof}

\begin{proof}[Proof of Lemma~\ref{en:BMD}]
	Let us start by observing that
	\begin{align*}
		\sup_{s\leq T} |B_s-B_{\s}| & = \max_{k=0,\ldots,n-1} \sup_{T_k\leq s\leq T_{k+1}} |B_s-B_{T_k}|, \\
		&= \max_{k=0,\ldots,n-1} \sup_{T_k\leq s\leq T_{k+1}} \max\left(B_s-B_{T_k}, -(B_s-B_{T_k})\right), \\
		& \leq \max_{k=0,\ldots,n-1} \sup_{T_k\leq s\leq T_{k+1}} \left(B_s-B_{T_k}\right) + \max_{k=0,\ldots,n-1} \sup_{T_k\leq s\leq T_{k+1}} \left(-\left(B_s-B_{T_k}\right)\right).
	\end{align*}
	Since the random variables $\sup_{T_k\leq s\leq T_{k+1}} \left(B_s-B_{T_k}\right)$, $k=0,\ldots, n-1$, as well as the variables $\sup_{T_k\leq s\leq T_{k+1}} \left(-\left(B_s-B_{T_k}\right)\right)$, $k=0,\ldots, n-1$, are independent and have the same law as $|B_{T_1}|$,
	\begin{equation*}
		\E\left[\sup_{s\leq T}|B_s-B_{\s}|^4\right] \leq 8 \frac{T^2}{n^2}\, \E\left[\max\left(|X_1|^4,\ldots,|X_n|^4\right)\right],
	\end{equation*}
	where $X_k$, $k=1,\ldots,n$ are independent normal gaussian random variables.
	
	Let $f$ be the function defined on $\R_+$ by $f(x)= e^{\sqrt{1+x/16}}$; $f$ is convex, increasing with values in $[e,+\infty[$. The inverse of $f$ is concave on $[e,+\infty[$, increasing and $f^{-1}(y) = 16\left[(\log y)^2-1\right]$. We have, by Jensen inequality,
	\begin{equation*}
		\E\left[\max_{k=1,\ldots,n} |X_k|^4 \right]  = \E\left[\max_{k=1,\ldots,n}f^{-1}\circ f\left(|X_k|^4\right)\right] \leq f^{-1}\left(\E\left[\max_{k=1,\ldots,n} f\left(|X_k|^4\right)\right]\right),
	\end{equation*}
	from which we deduce that
	\begin{equation*}
		\E\left[\max_{k=1,\ldots,n} |X_k|^4 \right] \leq f^{-1}\left(\E\left[\sum_{k=1}^n f\left(|X_k|^4\right)\right]\right) = f^{-1}\left(n\E\left[ f\left(|X_1|^4\right)\right]\right)\leq f^{-1}\left(ne\E\left[ e^{|X_1|^2/4}\right]\right).
	\end{equation*}
	Finally, we have
	\begin{align*}
		 f^{-1}\left(ne\E\left[ e^{|X_1|^2/4}\right]\right) = f^{-1}\left(ne\sqrt{2}\right)= 16\left(\left(1+\log n + \log(2)/2\right)^2-1\right).
	\end{align*}
	This concludes the proof of the lemma.
\end{proof}

Let us recall that $\bX^i$ are independent and identically distributed copies of $X$:
\begin{equation*}
\bX_t^i = X_0^i + \int_0^t b(\bX_s^i) \d s + \int_0^t \sigma(\bX_s^i)\d B_s^i + \sup_{s\leq t} G_0(\mu_s),\quad 1\leq i\leq N,
\end{equation*}
where $\mu_s$ stands for the law of
\begin{equation*}
	U_s = X_0 +\int_0^s b(X_r)\, dr + \int_0^s \sigma(X_r)\,dB_r.
\end{equation*}

\begin{theoreme}
	\label{thm:NS} 
	Let $T>0$, $N$ and $n$ be two non-negative integers.  Let assumptions \ref{H:eds}, \ref{H:h} and \ref{H:moments} hold.
\begin{enumerate}[(i)]
	\item There exists a constant $C$ depending on $T$, $b$, $\sigma$, $h$ and $X_0$ such that: for all $i\in\{1,\ldots,N\}$,
	\begin{equation*}
		\E\left[\sup_{t\leq T}\l|\bX^i_t -\tX_t^{i}\r|^2\right] \leq C\l( \frac{\log n}{n} + N^{-1/2} \r).
	\end{equation*}
\item If in addition \ref{H:smooth} hold, there exists a positive constant $C$ depending on $T$, $b$, $\sigma$, $h$ and $X_0$ such that: for all $i\in\{1,\ldots,N\}$,
\begin{equation*}
	\E\left[\sup_{t\leq T}\l|\bX^i_t -\tX_t^{i}\r|^2\right] \leq C\l( \frac{\log n}{n} + N^{-1} \r).
\end{equation*}
\end{enumerate}
\end{theoreme}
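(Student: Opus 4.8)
The plan is to obtain the result by a simple triangle inequality that passes through the continuous-time interacting particle system $X^i$, thereby reducing the statement to the two estimates already established in the excerpt. The key observation is that both Theorem~\ref{Prop:estiparticle} and Proposition~\ref{en:euler} are phrased relative to this same intermediate object: the former compares $X^i$ to the i.i.d.\ copies $\bX^i$, while the latter compares $X^i$ to its Euler discretization $\tX^i$. First I would write the pathwise bound
$$\left|\bX^i_t - \tX^i_t\right| \leq \left|\bX^i_t - X^i_t\right| + \left|X^i_t - \tX^i_t\right|,$$
then use the elementary inequality $(a+b)^2 \leq 2a^2 + 2b^2$, and finally take the supremum over $t\leq T$ and the expectation to get
$$\E\left[\sup_{t\leq T}\left|\bX^i_t - \tX^i_t\right|^2\right] \leq 2\,\E\left[\sup_{t\leq T}\left|\bX^i_t - X^i_t\right|^2\right] + 2\,\E\left[\sup_{t\leq T}\left|X^i_t - \tX^i_t\right|^2\right].$$

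Next I would estimate the two terms separately. The first term on the right is exactly the propagation-of-chaos error controlled by Theorem~\ref{Prop:estiparticle}: under \ref{H:moments} it is bounded by $C\,N^{-1/2}$, and under the additional smoothness assumption \ref{H:smooth} by the sharper $C\,N^{-1}$. The second term is the Euler-scheme error between the continuous-time interacting particles and their discretization, which is precisely the content of Proposition~\ref{en:euler} and is bounded by $C\,\frac{\log n}{n}$. Adding the two bounds yields statement (i); replacing the first bound by its $N^{-1}$ refinement under \ref{H:smooth} yields statement (ii). All the constants depend only on $T$, $b$, $\sigma$, $h$ and $X_0$ (the dependence on $X_0$ in case (ii) entering through $\E[\sup_{s\leq T}|X_s|^2]$ via Proposition~\ref{prop:polymom}), so they combine into a single constant of the required form.

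Because all the genuine analytic work has already been carried out in Theorem~\ref{Prop:estiparticle} and Proposition~\ref{en:euler}, there is no substantial obstacle here beyond bookkeeping. The only point that deserves a word of care is that the three families $\bX^i$, $X^i$ and $\tX^i$ are all driven by the \emph{same} Brownian motions $B^i$ and share the same initial data $X_0^i$; this common coupling is what makes the pathwise triangle inequality legitimate and allows the two cited estimates to be invoked for the same index $i$ simultaneously. Once this is noted, the proof reduces to the displayed decomposition above.
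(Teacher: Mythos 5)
Your proof is correct and follows exactly the same route as the paper: a triangle inequality through the continuous-time interacting particle system $X^i$, followed by an application of Theorem~\ref{Prop:estiparticle} to the propagation-of-chaos term and Proposition~\ref{en:euler} to the Euler-scheme term. Your additional remark about the common coupling of the Brownian motions and initial data is a useful clarification that the paper leaves implicit.
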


\begin{proof}
	The proof is straightforward writing
	\begin{equation*}
		\l|\bX^i_t -\tX_t^{i}\r| \leq \l|\bX^i_t -X_t^{i}\r| + \l|X^i_t -\tX_t^{i}\r|
	\end{equation*}
	and using Theorem~\ref{Prop:estiparticle} and Proposition~\ref{en:euler}.
\end{proof}

\section{Numerical illustrations}
\label{sec:NI}
Throughout this section, we consider, on $[0,T]$ the following sort of processes:
\begin{equation}\label{eq:mainNumIll}	
	\left\lbrace\begin{array}{ll}
	 \displaystyle X_t  =X_0-\int_0^t \left(\beta_s + a_s X_s\right)\d s + \int_0^t \left(\sigma_s + \gamma_s X_s\right) \d B_s + K_t, \\
	 \displaystyle  \E[h(X_t)] \geq 0, \quad \int_0^t \E[h(X_s)] \, \d K_s = 0,
	\end{array}\right.
	\end{equation}
where $(\beta_t)_{t\geq 0}$, $(a_t)_{t \geq 0}$, $(\sigma_t)_{t \geq 0}$ and $(\gamma_t)_{t \geq 0}$ are bounded adapted processes. This sort of processes allow us to make some explicit computations leading us to illustrate the algorithm. Our results are presented for different diffusion and functions $h$ that are summarized below. 

\subsection*{Linear constraint}
We first consider cases where $h: \R \ni x \longmapsto x-p \in \R$. 
\begin{enumerate}[\text{Case} (i)]
\item \label{item:firstcaseex1} Drifted Brownian motion: $\beta_t = \beta>0$, $a_t = \gamma_t = 0$, $\sigma_t=\sigma>0$, $X_0=x_0\geq p$. We have 
$$ 
K_t =  \left(p + \beta t -x_0 \right)^+.
$$
\item \label{item:firstcaseex2} Ornstein Uhlenbeck process: $\beta_t = \beta>0$, $a_t = a>0$, $\gamma_t=0$, $\sigma_t=\sigma >0$, $ X_0 = x_0$  with $x_0\geq p > -\beta/a$. We have 
\begin{eqnarray*}
K_t = (ap+\beta)(t-t^\star)1_{t \geq t^{\star}},\text{ where }t^\star = \frac{1}{a} \left(\ln(x_0+\beta/a) - \ln(p+\beta/a)\right).
\end{eqnarray*}

\item \label{item:firstcaseex3}  Ornstein Uhlenbeck process with stochastic mean parameter: $\beta_t = \beta>0$, $a_t = -\epsilon B_t$, $\epsilon>0$, $\gamma_t=0$, $\sigma_t=\sigma >0$, $X_0 = x_0$, $x_0>p$. When $\epsilon\to 0^{+}$ 
\begin{equation*}
K_t = \left( p -x_0 +\beta t - \sigma \epsilon  \frac{t^2}{2} \right) 1_{[t^\star, \bar{t}[}(t) + \left( -(x_0-p) + \frac{\beta^2}{2\epsilon\sigma} \right) 1_{t\geq \bar t}+ o(\epsilon),
\end{equation*}
 where  $\bar{t}=\beta/(\epsilon\sigma)$ and $t^\star = \left(\beta - \sqrt{\beta^2 - 2(x_0-p)\sigma \epsilon}\right)/(\epsilon \sigma)$.

\item \label{item:firstcaseex4} Black and Sholes process: $\beta_t = \beta >0$, $a_t = a>0$, $\sigma_t = 0$, $\gamma_t = \gamma>0$. Then
$$
K_t = (ap+\beta)(t-t^\star)1_{t \geq t^{\star}},\text{ where }t^\star = \frac{1}{a} \left(\ln(x_0+\beta/a) - \ln(p+\beta/a)\right).$$
\end{enumerate}

\subsection*{Nonlinear constraint}
 Secondly, we illustrate the case of non-linear function $h$: 
$$ h: \R \ni  x\mapsto x+\alpha\sin(x) -p \in \R,\ -1<\alpha<1,$$
and we illustrate this case with an
\begin{enumerate}[\text{Case} (i)]
	\addtocounter{enumi}{4}
\item \label{item:seccaseex2}Ornstein Uhlenbeck process: $a_t=a>0$, $\beta_t=\beta>0$, $\gamma_t=0$, $\sigma_t=\sigma>0$, $X_0=x_0$ with $x_0 >|\alpha|+p$. We obtain
\begin{eqnarray*}
\d K_t =  e^{-at}\d \sup_{s \leq t} \l(F_s^{-1}(0)\r)^+,
\end{eqnarray*}
where for all $t$ in $[0,T]$,
\begin{eqnarray*}
F_t: \R \ni x &\mapsto& \Bigg\{e^{-at}\l(x_0-\beta\l(\frac{e^{at}-1}{a}\r) + x\r) + \alpha \exp\l(-e^{-at}\frac{\sigma^2}{a}\sinh(at)\r)\\
&& \times \sin\l(e^{-at}\l(x_0 - \beta\l(\frac{e^{at}-1}{a}\r) + x \r)\r)-p\Bigg\}.
\end{eqnarray*}
\end{enumerate}

\begin{remarque}
The reader may object that Case (\ref{item:firstcaseex3}) is out of the scope of our theoretical results, which is true. Nevertheless we let it in order to illustrate the robustness of the numerical method.
\end{remarque}

These examples have been chosen in such a way that we are able to give an analytic form of the reflecting process $K$. This enables us to compare numerically the ``true'' process $K$ and its empirical approximation $\hat{K}$. When an exact simulation of the underlying process is available, we compute the approximation rate of our algorithm. 

\subsection{Proofs of the numerical illustrations}
\label{sec:proofnumeric}
In order to have closed, or almost closed, expression for the compensator $K$ we introduce the process $Y$ solution to the non-reflected SDE
\begin{equation*}
Y_t = X_0-\int_0^t \left(\beta_s +  a_s Y_s\right)\d s + \int_0^t \left(\sigma_s + \gamma_s Y_s\right) \d B_s.
\end{equation*}
Letting $A_t = \int_0^t a_s \d s$ and applying Itô's formula on $e^{A_t}(X_t-Y_t)$, we get 
\begin{equation*}
X_t = Y_t + e^{-A_t}\int_{0}^t e^{A_s} \d K_s + e^{-A_t}\int_{0}^t e^{A_s}  \gamma_s (X_s-Y_s) \d B_s.
\end{equation*}
Hence, the constraint $\E[h(X_t)]\geq 0$ rewrites
\begin{equation}\label{eq:expressioncontrainte}
\E\left[h\left(Y_t + e^{-A_t}\int_{0}^t e^{A_s} \d K_s + e^{-A_t}\int_{0}^t e^{A_s} \gamma_s (X_s-Y_s)\d B_s\right)\right]\geq 0.
\end{equation}

\begin{proof}[Proof of assertions \eqref{item:firstcaseex1}, \eqref{item:firstcaseex2} and \eqref{item:firstcaseex4}] 
	The formula for $K$ comes from the expression of its density given in Corollary \ref{prop:densityK} and the fact that in all these cases
	\begin{equation*}
		\E\left[Y_t\right] - p = e^{-at} \left(x_0+\frac{\beta}{a}\right) - \left(p+\frac{\beta}{a}\right).
	\end{equation*}
\end{proof}

\begin{proof}[Proof of~\eqref{item:firstcaseex3}]
Recall that we supposed $h: \R \ni x \mapsto x-p \in \R$. In that case, since $\gamma\equiv 0$, the constraint \eqref{eq:expressioncontrainte} becomes
\begin{equation}\label{eq:contraintepos}
\E\left[e^{-A_t}\int_{0}^t e^{A_s} \d K_s\right] \geq   p -\E [Y_t],
\end{equation}
so that $K$ is nondecreasing with $K_0=0$ and, for all $t$ in $[0,T]$, 
\begin{equation}\label{eq:expressionkt}
\E\left[e^{-A_t}\int_{0}^t e^{A_s} \d K_s\right] \geq   p -\E [Y_t],\qquad \int_0^t \left(\E[X_s]-p\right) \, \d K_s = 0.
\end{equation}

Note first that since 
\begin{equation*}
\E \left[Y_s\right] = \E\left[e^{-A_s}\right]\, x_0 - \E\left[\int_0^s e^{-(A_s-A_r)} \beta \d r\right]  + \E \left[e^{-A_s} \int_0^s e^{A_{r}} \sigma \d B_r\right],
\end{equation*}
and using the integration by parts formula we have
\begin{equation*}
\E \left[ e^{-A_s} \int_0^s e^{A_{r}} \sigma \d B_r\right] = \sigma\,\E \left[\int_0^s D_r \left(e^{-A_s}\right)e^{A_{r}}  \d r\right] = -\sigma\, \E\left[ \int_0^s  \int_r^s (D_r a_u)\d u \, e^{-(A_s-A_r)} \d r\right].
\end{equation*}

Remember that, in this case, we supposed that $T\leq 1$, $\beta_t=\beta>0$ and $a_t=-\epsilon B_t$ for $\epsilon>0$ supposed to be small enough. We here illustrate the dependence of the processes w.r.t. the parameter $\epsilon$ by adding a superscript $\epsilon$ on $Y$ and $K$. Since $\int_s^t B_r\, dr$ is a centered gaussian random variable with variance $(t-s)^3/3$, we have
\begin{align*}
	\E\left[Y^\epsilon_t\right] & = x_0 \, e^{\epsilon^2 t^3/6} - \beta \int_0^t e^{\epsilon^2(t-s)^3/6}\, ds + \sigma\epsilon \, \int_0^t (t-s) e^{\epsilon^2(t-s)^3/6}\, ds, \\
	\E\left[X^\epsilon_t\right] & = \E\left[Y^\epsilon_t\right] + \int_0^t e^{\epsilon^2(t-s)^3/6}\, dK^\epsilon_s.
\end{align*}
Therefore, for all $s\leq t$,
\begin{equation*}
	\E\left[X^\epsilon_s\right] -p = x_0-p-\beta s + \epsilon\sigma\, \frac{s^2}{2} + o(\epsilon) + K^\epsilon_t \left(1+o(\epsilon)\right).
\end{equation*}
It follows that, up to $o(\epsilon)$,
\begin{equation*}
	K_t^\epsilon =  \sup_{s\leq t}\left( -(x_0-p) + \beta s - \sigma \epsilon  \frac{s^2}{2} \right)^+ .
\end{equation*}
Since $\epsilon\to 0^{+}$, we assume that $\beta^2 > 2\epsilon\sigma(x_0-p)$ and we obtain $K^\epsilon_t = 0$ if $t<t^\star$,
\begin{equation*}
	K^{\epsilon}_t = -(x_0-p) +\beta t - \sigma \epsilon  \frac{t^2}{2}, \text{ if } t^\star \leq t < \frac{\beta}{\epsilon\sigma}, \qquad K^{\epsilon}_t= -(x_0-p) + \frac{\beta^2}{2\epsilon\sigma}, \text{ for } t\geq \frac{\beta}{\epsilon\sigma},
\end{equation*}
where $t^\star = \left(\beta - \sqrt{\beta^2-2\epsilon\sigma(x_0-p)}\right)/(\epsilon\sigma)$.
\end{proof}

\begin{proof}[Proof of assertion \eqref{item:seccaseex2}]
In that case, we have
\begin{align*}
Y_t &= e^{-at}\l(x_0- \beta \l(\frac{e^{at}-1}{a}\r)\r) + \sigma e^{-at} \int_0^t e^{as}\d B_s := f_t +G_t,
\end{align*}
and 
\begin{eqnarray*}
X_t = Y_t + e^{-at}\bar{K}_t,\quad \bar{K}_t =\int_0^t e^{as}\d K_s.
\end{eqnarray*}
Hence 
\begin{eqnarray*}
h(X_t) &=&  Y_t +e^{-at} \bar{K}_t + \alpha \sin(Y_t+e^{-at}\bar{K}_t)-p\\
&=& Y_t +e^{-at} \bar{K}_t + \alpha \l(\sin(Y_t) \cos(e^{-at}\bar{K}_t)+\cos(Y_t) \sin(e^{-at}\bar{K}_t)\r)-p\\
&=& Y_t +e^{-at} \bar{K}_t + \alpha \Big[ \cos(e^{-at}\bar{K}_t)\l\{ \sin(f_t)\cos(G_t)+\sin(G_t)\cos(f_t) \r\}\\
&& +\sin(e^{-at}\bar{K}_t)\l\{ \cos(f_t)\cos(G_t)-\sin(f_t)\sin(G_t)\r\}\Big]-p.
\end{eqnarray*}
Since $G_t$ is a centered gaussian random variable with variance $\sigma^2\dfrac{1-e^{-2at}}{2a}=\sigma^2e^{-at}\dfrac{\sinh(at)}{a}$,
\begin{equation*}
\E[\sin(G_t)] = 0,\quad \text{   and    }\quad  \E[\cos(G_t)] = \exp\l(-e^{-at}\frac{\sigma^2}{a}\sinh(at)\r)=:g(t),
\end{equation*}
we obtain that 
\begin{align*}
\E \left[h(X_t)\right] &= f(t) + e^{-at}\bar{K}_t + \alpha g(t)\sin\l(f_t + e^{-at}\bar{K}_t\r)-p, \\
& :=F_t(\bar{K}_t).
\end{align*}
Therefore,
\begin{equation*}
\bar{K}_t = \sup_{s \leq t}\l(F_s^{-1}(0)\r)^+, \quad\text{ and}\quad \d K_t = e^{-at} \d \sup_{s \leq t} \l(F_s^{-1}(0)\r)^+.
\end{equation*}
\end{proof}

\subsection{Illustrations}

This computation works as follows. Let $0=T_0 < T_1 <\cdots<T_{n}=T$ be a subdivision of $[0,T]$ of step size $1/n$, $n$ being a positive integer, let $X$ be the unique solution of the MRSDE \eqref{eq:mainNumIll} and let, for a given $i$, $\big(\tX_{T_k}^{i}\big)_{0\leq k \leq n}$ be its numerical approximation given by Algortihm \ref{algo:KF}. For a given integer $L$, we draw $(\bX^l)_{1\leq l \leq L}$ and $\left(\tX^{i,l}\right)_{1\leq l \leq L}$, $L$ independent copies of $X$ and $\tX^{i}$. We then approximate the $\mathbb{L}^2$-error of Proposition \ref{thm:NS} by:
\begin{equation}
\hat{E} = \left(\frac{1}{L}\sum_{l=1}^L \max_{0\leq k \leq n} \l|X_{T_k}^l - \tX_{T_k}^{i,l}\r|^2\right)^{1/2}.
\end{equation}

\begin{figure}[h!]
\includegraphics[scale=0.5]{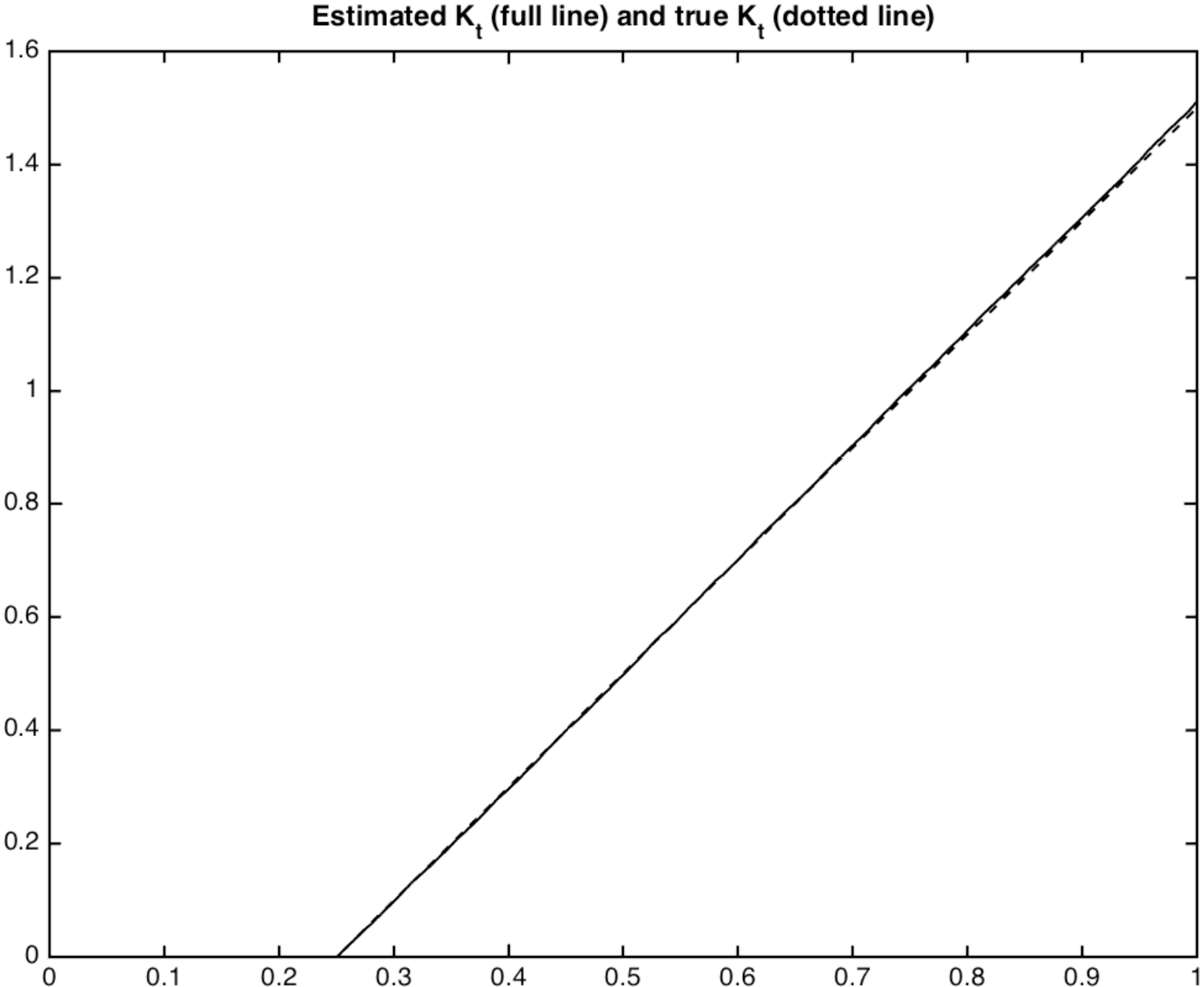}
\caption{Case \eqref{item:firstcaseex1}. $n=500,\ N=10000,\ T=1,\ \beta=2,\ \sigma=1,\ x_0=1,\ p=1/2$ }
\label{fig1}
\end{figure}
\begin{figure}[h!]
\includegraphics[scale=0.5]{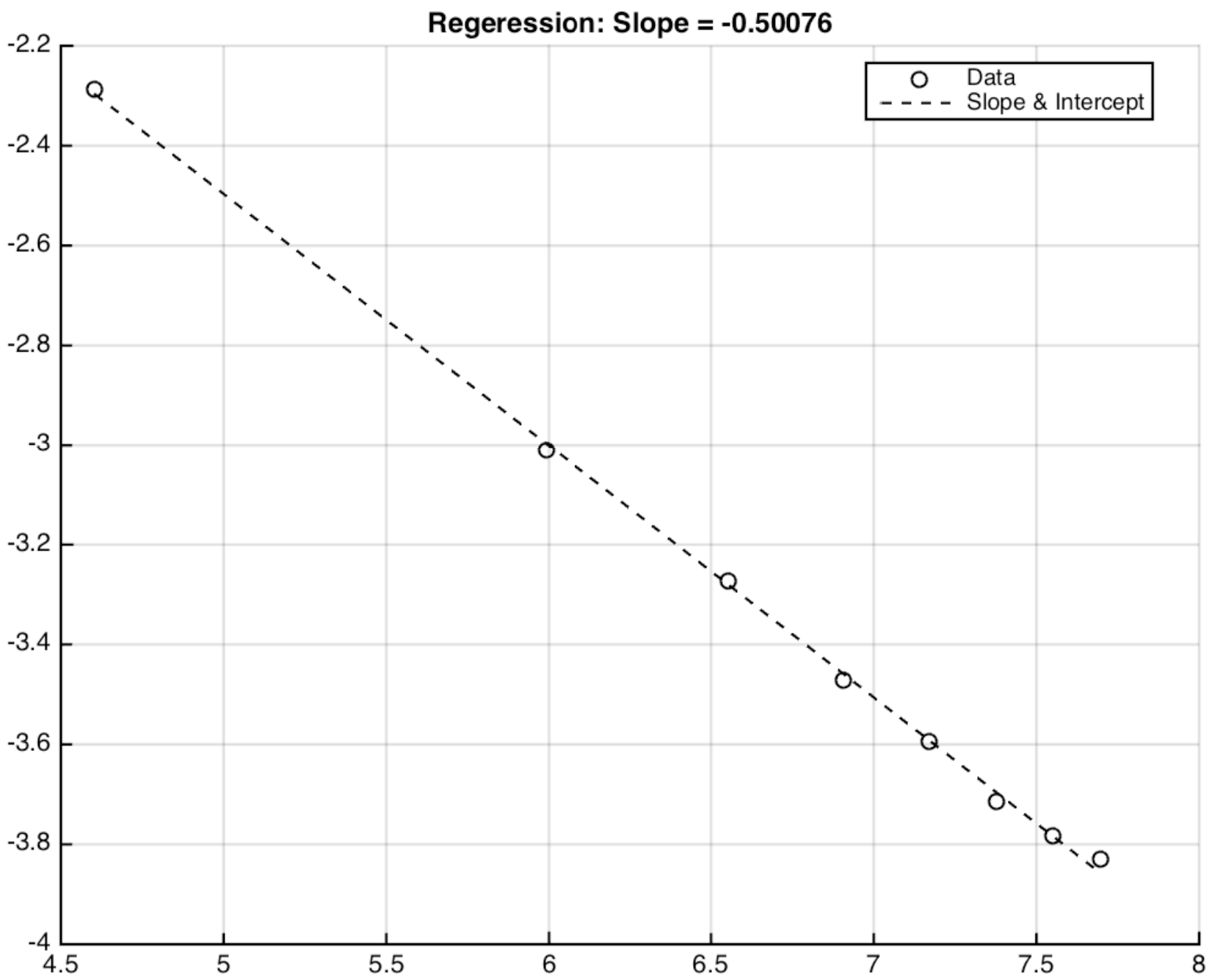}
\caption{Case \eqref{item:firstcaseex1}. Regression of $(1/2)\log(\hat{E})$ wr.r.t. $\log(n)$. Data: $\hat{E}$ when $n$ varies from $100$ to $2200$ with step size $300$. Parameters: $\ N=1000,\ T=1,\ \beta=2,\ \sigma=1,\ x_0=1,\ p=1/2$, $L =1000$. }
\label{fig2}
\end{figure}
\begin{figure}[h!]
\includegraphics[scale=0.5]{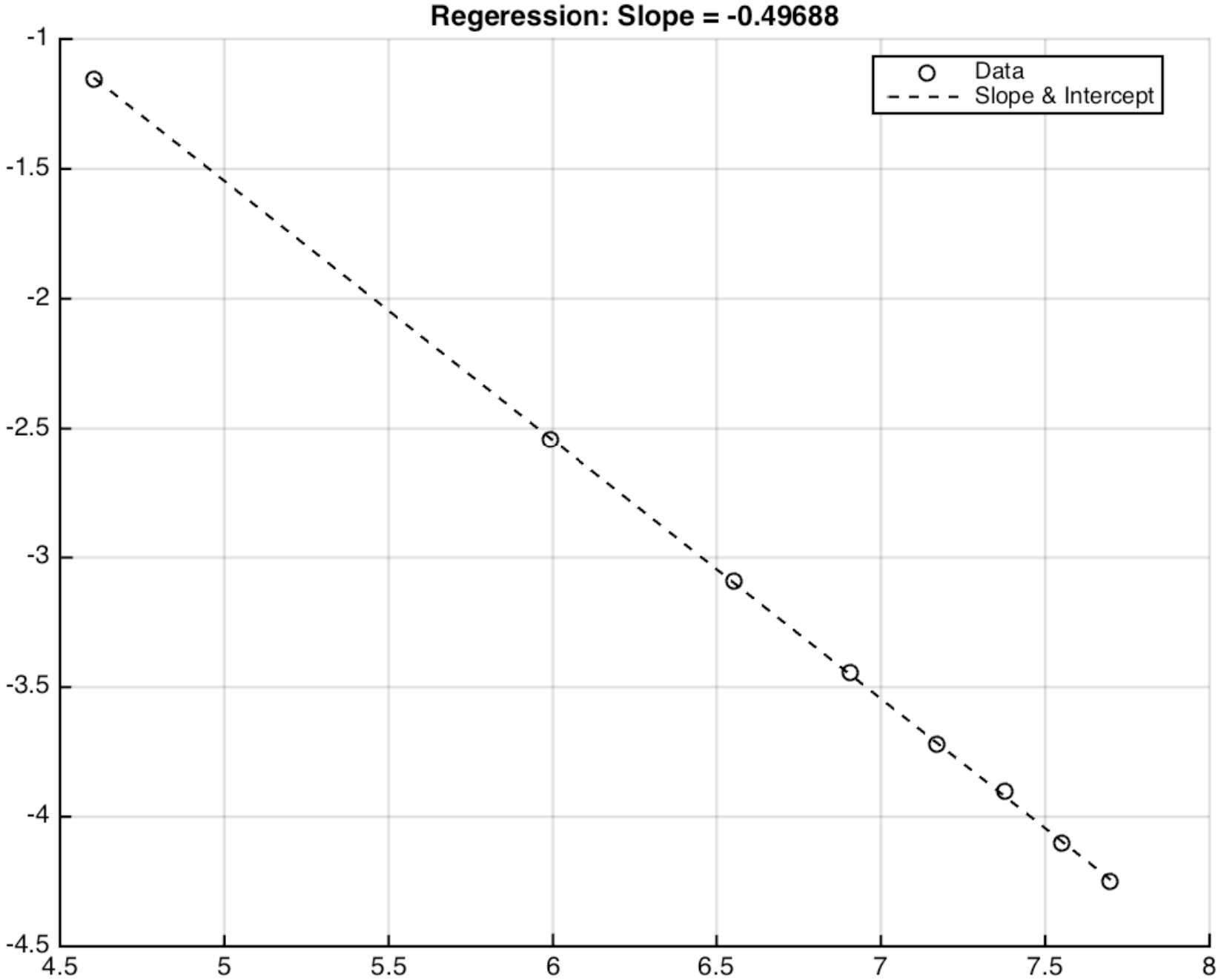}
\caption{Case \eqref{item:firstcaseex1}. Regression of $(1/2)\log(\hat{E})$ wr.r.t. $\log(N)$. Data: $\hat{E}$ when $N$ varies from $100$ to $2200$ with step size $300$. Parameters: $\ n=100,\ T=1,\ \beta=2,\ \sigma=1,\ x_0=1,\ p=1/2$, $L =1000$. }
\label{fig3}
\end{figure}

\begin{figure}[h!]
\includegraphics[scale=0.5]{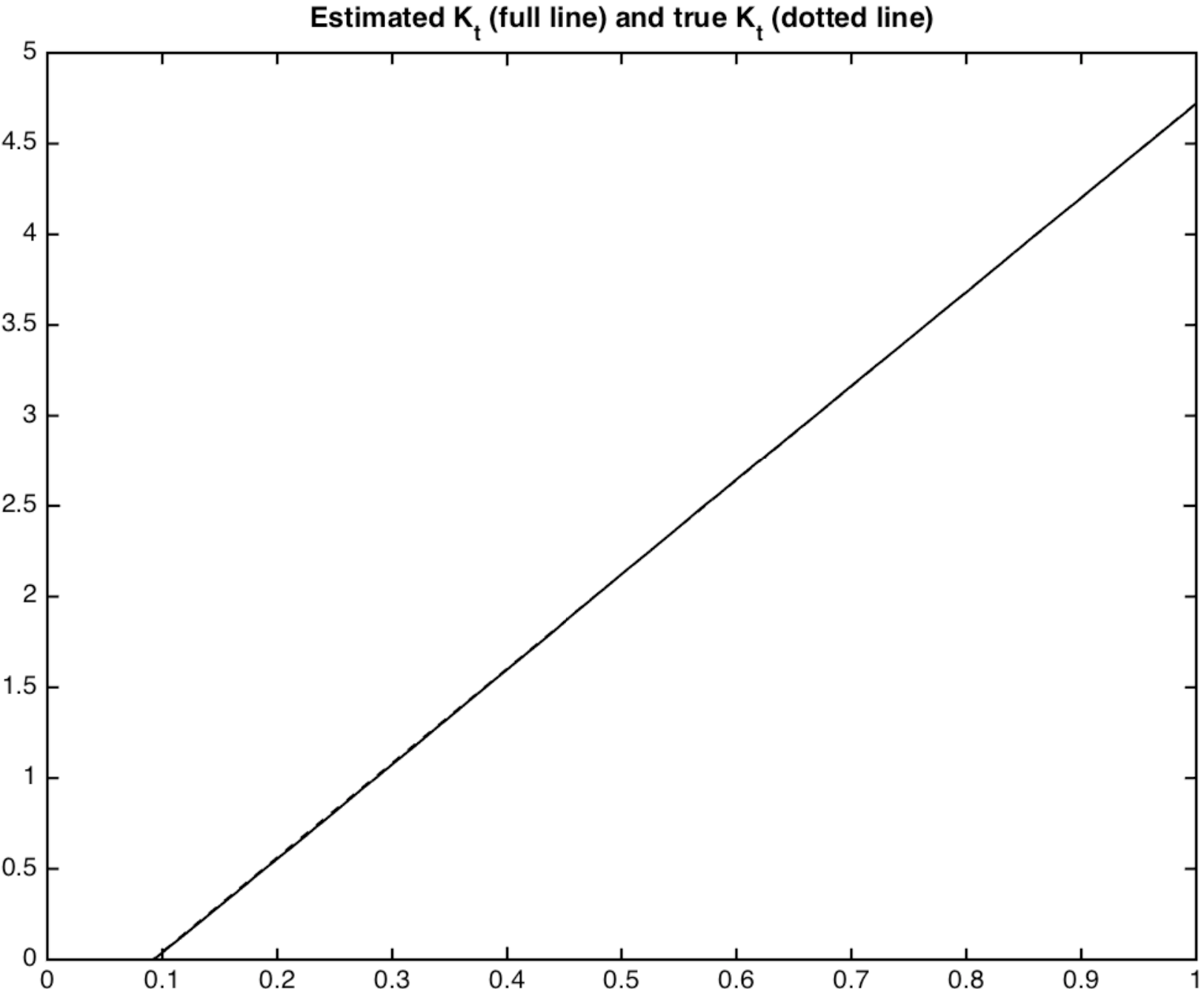}
\caption{Case \eqref{item:firstcaseex2}. $n=500,\ N=10000,\ T=1,\ \beta=2.1,\ a=1,\ \sigma=1,\ x_0=1,\ p=3.6$ }
\label{fig4}
\end{figure}
\begin{figure}[h!]
\includegraphics[scale=0.5]{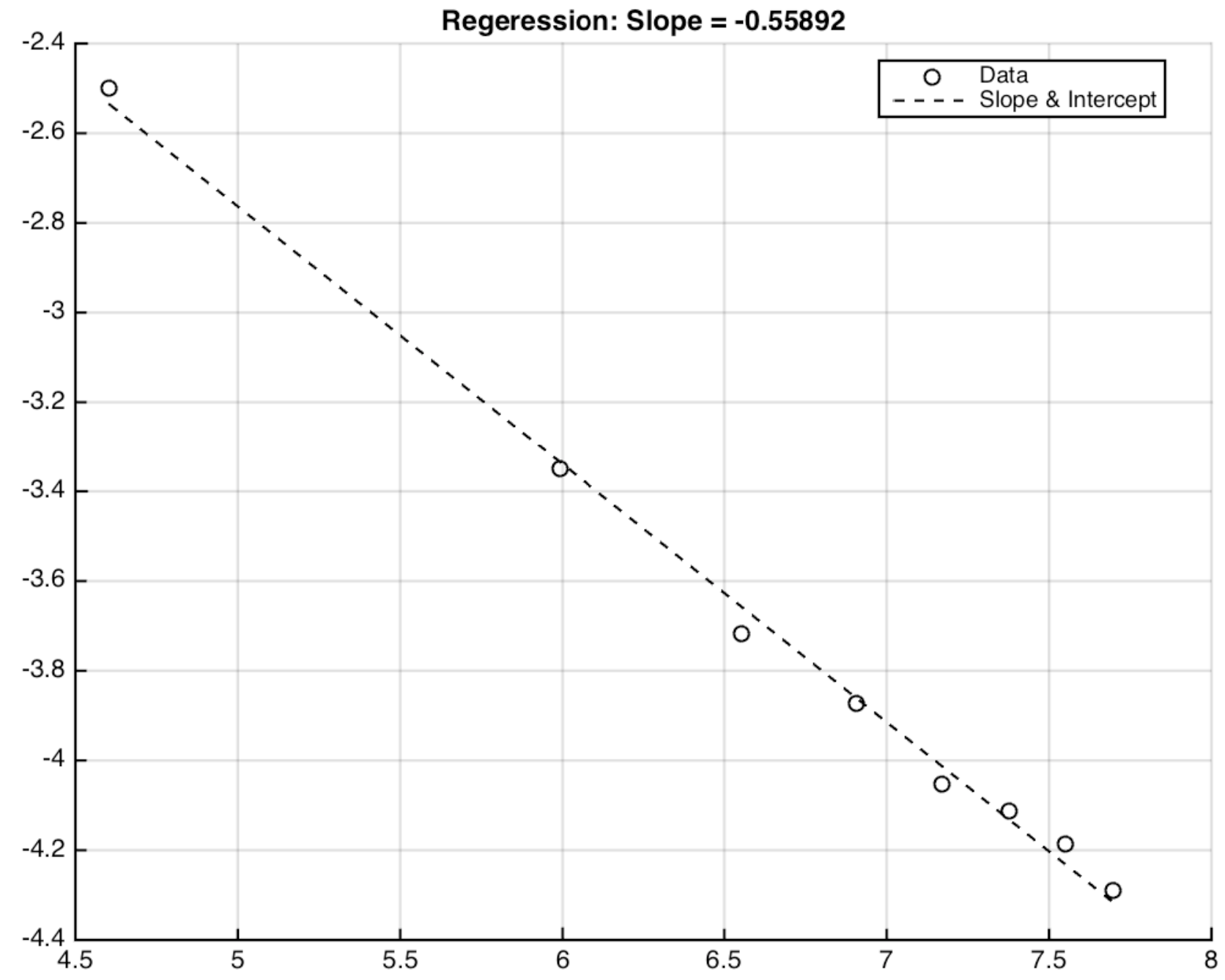}
\caption{Case \eqref{item:firstcaseex2}. Regression of $(1/2)\log(\hat{E})$ wr.r.t. $\log(n)$. Data: $\hat{E}$ when $n$ varies from $100$ to $2200$ with step size $300$. Parameters: $\ N=1000,\ T=1,\ \beta=2,\ \sigma=1,\ x_0=1,\ p=1/2$, $L =1000$. }
\label{fig5}
\end{figure}
\begin{figure}[h!]
\includegraphics[scale=0.5]{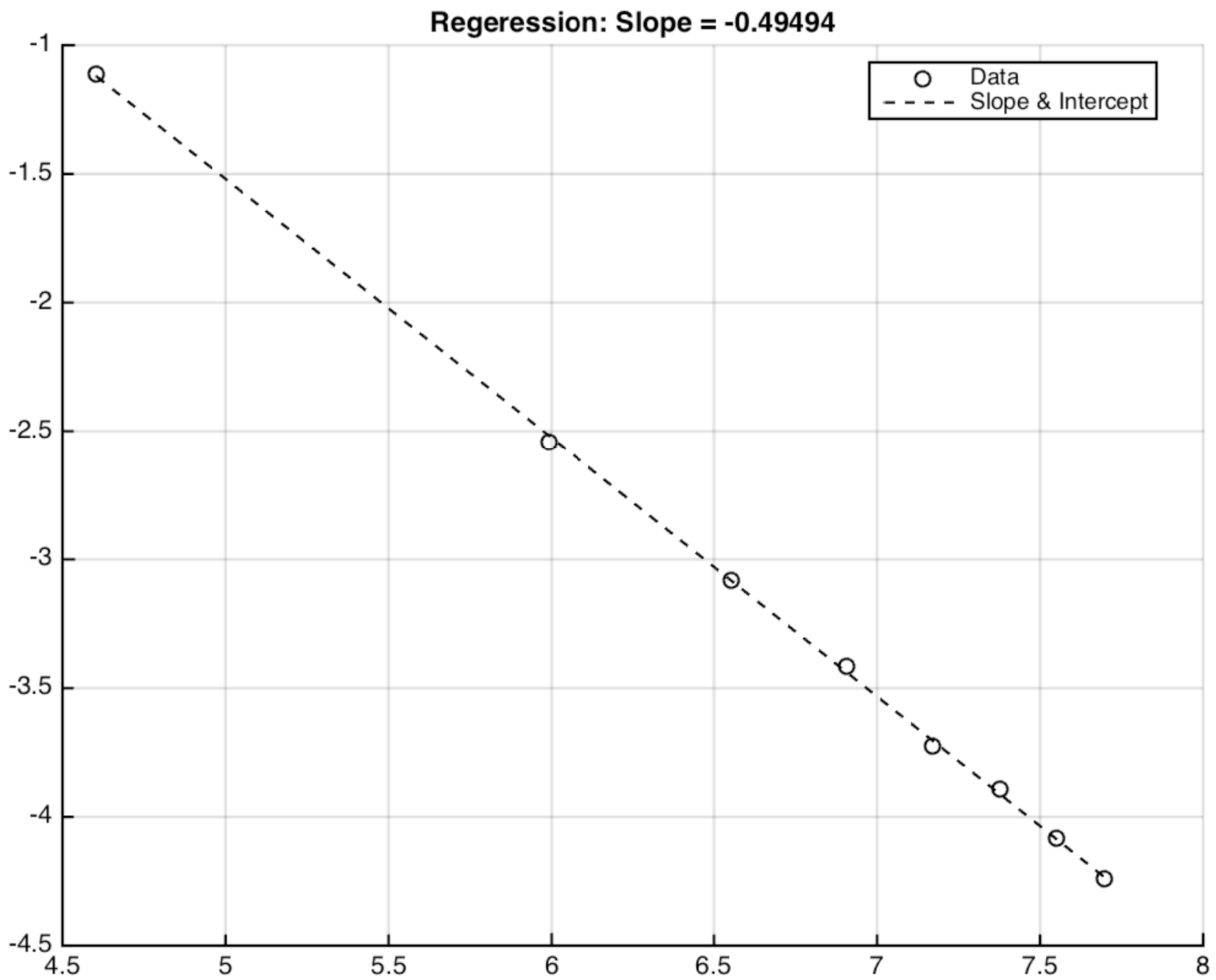}
\caption{Case \eqref{item:firstcaseex2}. Regression of $(1/2)\log(\hat{E})$ wr.r.t. $\log(N)$. Data: $\hat{E}$ when $N$ varies from $100$ to $2200$ with step size $300$. Parameters: $\ n=100,\ T=1,\ \beta=2,\ \sigma=1,\ x_0=1,\ p=1/2$, $L =1000$. }
\label{fig6}
\end{figure}

\begin{figure}[h!]
\includegraphics[scale=0.5]{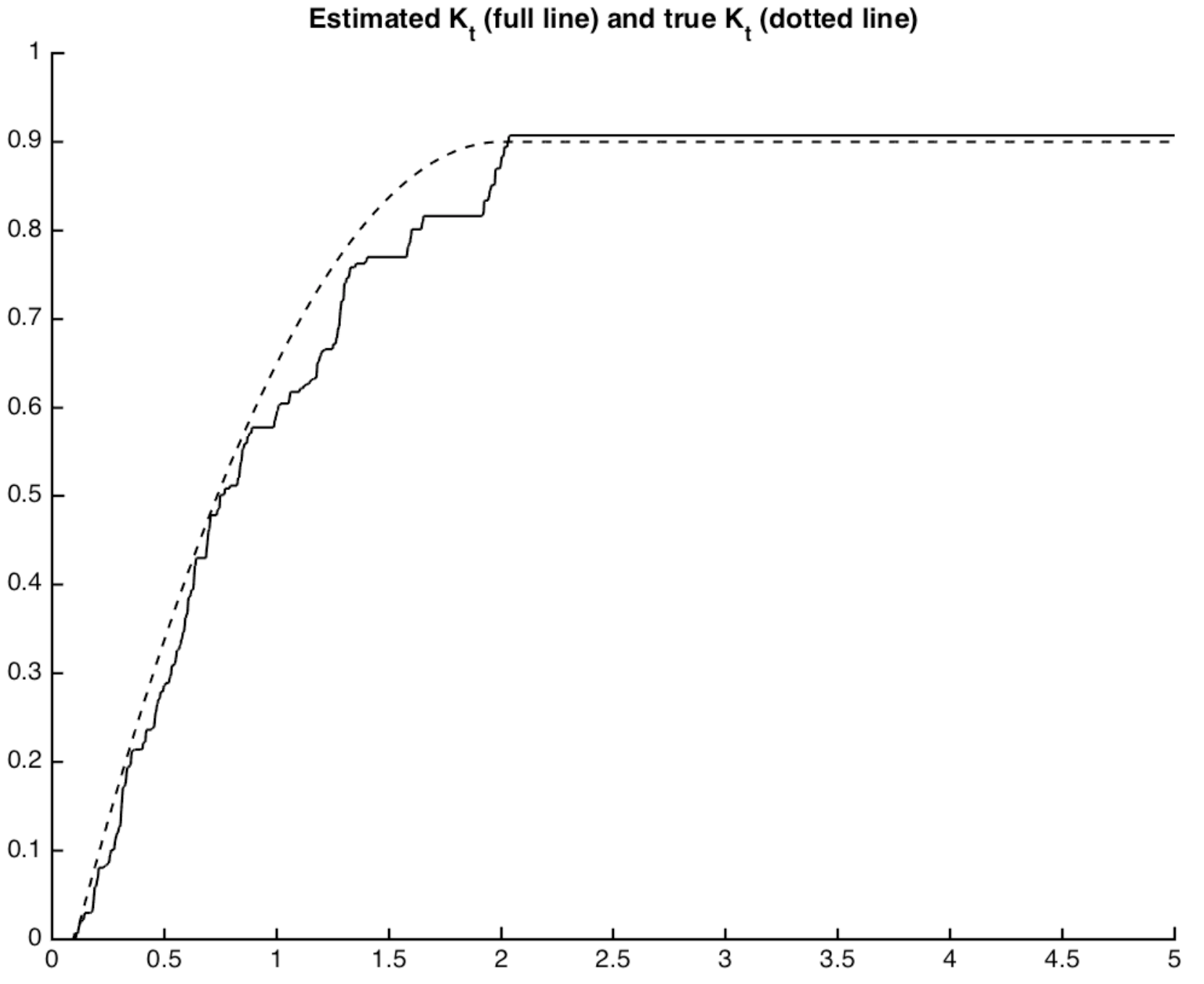}
\caption{Case \eqref{item:firstcaseex3}. Parameters: $n=2000,\ N=10000,\ T=5,\ \beta=1,\ \epsilon=5/100,\ \sigma=1/(2\epsilon),\ x_0=1,\ p=0.9$ }
\label{fig7}
\end{figure}

\begin{figure}[h!]
\includegraphics[scale=0.5]{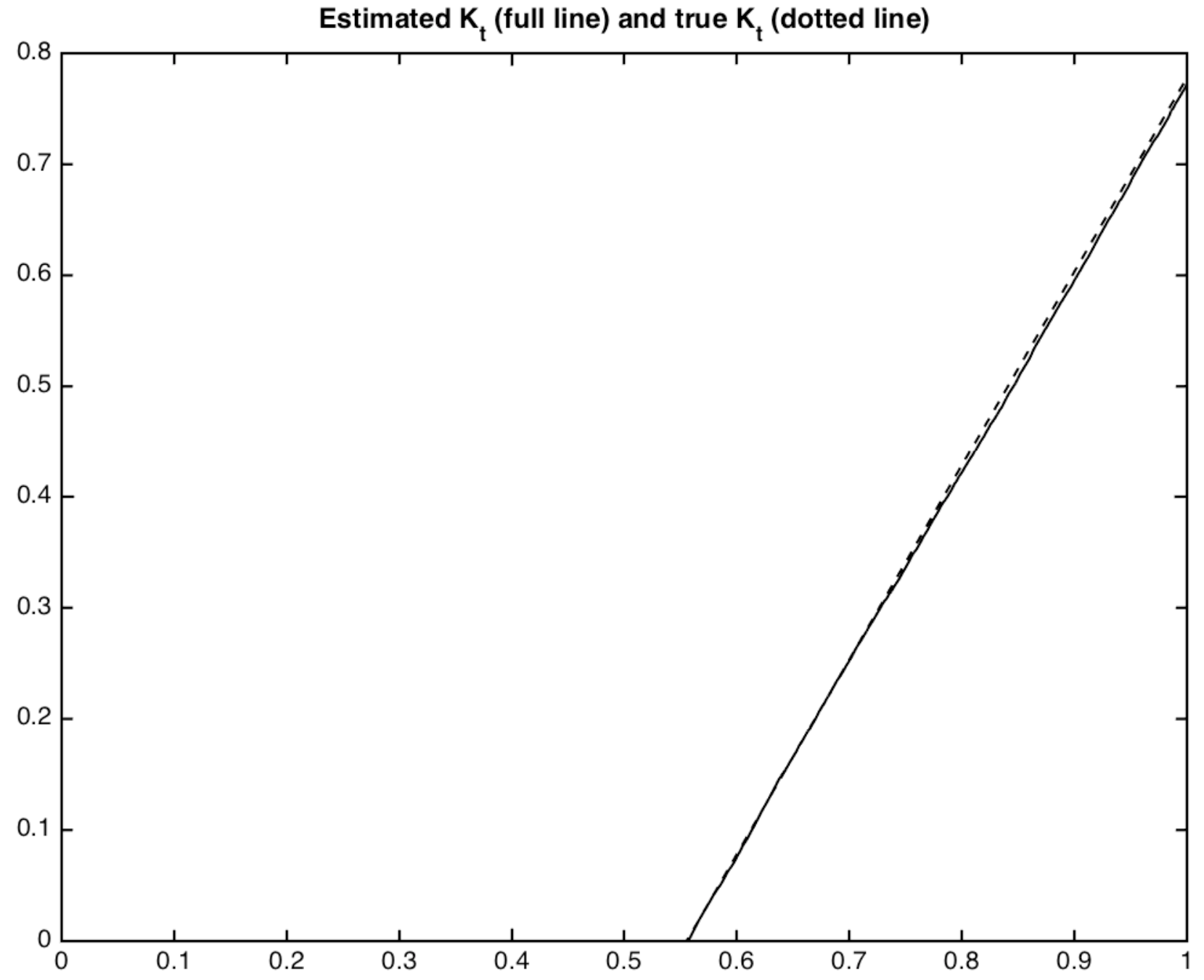}
\caption{Case \eqref{item:firstcaseex4}. Parameters: $n=500,\ N=10000,\ T=1,\ a=1,\ \gamma=1,\ x_0=4,\ p=1$ }
\label{fig8}
\end{figure}

\begin{figure}[h!]
\includegraphics[scale=0.5]{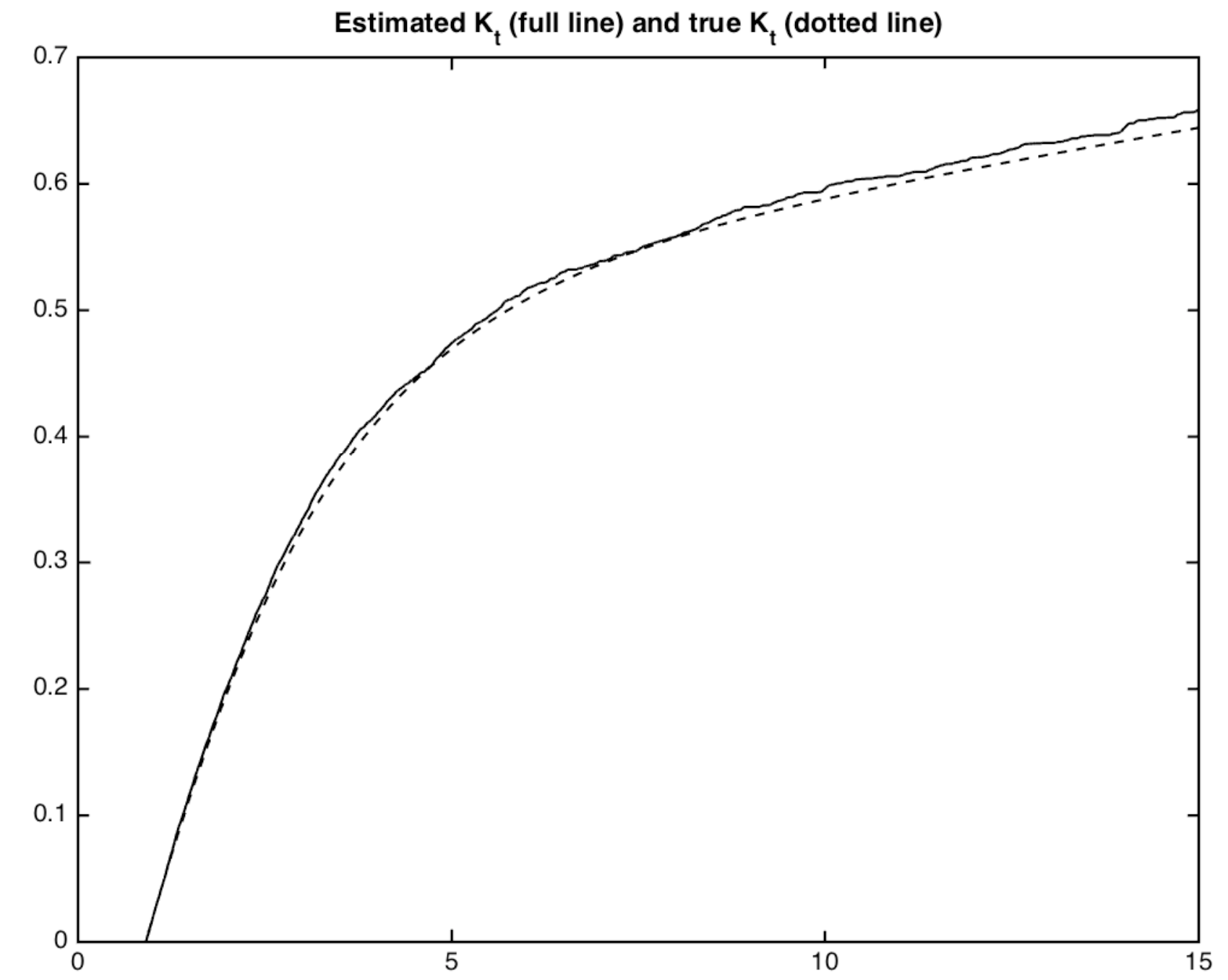}
\caption{Case \eqref{item:seccaseex2}. Parameters: $n=1000,\ N=100000,\ T=15,\ \beta=10^{-2},\ \sigma=1,\ p=\pi/2,\ \alpha=0.9,\ x_0$ is the unique solution of $x+\alpha \sin(x)-p=0$ plus $10^{-1}$. }
\label{fig9}
\end{figure}

\clearpage

\bibliographystyle{alpha}
\bibliography{BibSDEMR}

\end{document}